    \newcommand{\R}{\mathbb R}
    \newcommand{\K}{{\mathcal K}}
    \newcommand{\sub}{\subseteq} 
    \newcommand{\U}{\mathcal{U}} 
   \newcommand{\eps}{\varepsilon}
\newtheorem{thm}{Theorem}[section]
\newtheorem{lem}[thm]{Lemma}
\newtheorem{rmk}[thm]{Remark}
\newtheorem{prop}[thm]{Proposition}
\newtheorem{defi}[thm]{Definition}
\newtheorem{example}[thm]{Example}
\newcommand{\sett}[2]{\left\{  #1  \;\middle\vert\; #2 \right\}}
\newcommand{\li}{L^\infty}
    \newcommand{\ct}[1]{\langle {#1}\rangle \lower.3ex\hbox{$_{t}$}}
    \newcommand{\lt}[1]{[ {#1}] \lower.3ex\hbox{$_{t}$}}
   \newcommand{\blim}{\mathcal{B}\Lim{\U}}
\newcommand{\N}{{\mathbb N}}
\newcommand{\s}{\mathcal{S}}
\newcommand{\F}{\mathcal{F}}
\newcommand{\0}{{\bf 0}}
\newcommand{\Lim}[1]{\raisebox{0.5ex}{\scalebox{0.9}{$\displaystyle \lim_{#1}\;$}}}
\newcommand{\Sup}[1]{\raisebox{0.5ex}{\scalebox{0.9}{$\displaystyle \sup_{#1}\;$}}}
\title{Non-trivial translation-invariant valuations on $L^\infty$}
\author{L. Cavallina}
\date{}
\begin{document}
\maketitle

\begin{abstract} 
Translation-invariant valuations on the space $\li(\R^n)$ are examined. We prove that such functionals vanish on functions with compact support. Moreover a rich family of non-trivial translation-invariant valuations on $\li(\R^n)$ is constructed through the use of ultrafilters on $\R$.  
\end{abstract}
\bigskip

\noindent{2010 {\it Mathematics Subject classification.} 46E30, (52B45)}

\bigskip

\noindent {\it Keywords and phrases: $\li(\R^n)$, valuations, non-trivial, ultrafilters, Banach limits.}

\tableofcontents

\section{Introduction}
The concept of \emph{valuation} arises in a fairly natural way when trying to formalize the idea of ``how big something is''.
Take for examples two finite sets $A$ and $B$: a reasonable way to evaluate how big these sets are (whence the name ``valuation''), is counting their elements. We know that 
\begin{equation}\nonumber
|A\cup B|=|A|+|B|-|A\cap B|,
\end{equation}
as we must be careful not to count the elements in the  intersection twice.
\par
Functionals that satisfy that property are nowadays called \emph{valuations}, although the first mathematician to ever study them (Hugo Hadwiger) referred to them as ``Eik{\"o}rper-\\funktional'' (literally egg-body functional) in \cite{Had}. 
Hadwiger's work provided a complete and elegant characterization of a special class of ``regular'' valuations over the family of compact convex sets (shortly, \emph{convex bodies}) of the Euclidean $n$-dimensional space.  He restricted his attention to those valuations that are rigid motion invariant and continuous with respect to a certain metric (called \emph{Hausdorff metric}) and proved that they can be written uniquely as a linear combination of $(n+1)$ fundamental valuations (called \emph{quermassintegrals}). In the same proof, Hadwiger also proved that rigid motion invariant monotone increasing (with respect to set inclusion) valuations can be expressed as a linear combination (with non negative coefficients) of the quermassintegrals.\par
Since then many others have tried to expand or generalize the results of Hadwiger (by changing assumptions on invariance and regularity) and to classify valuations defined on other classes (usually function spaces).  


Recently the concept of valuations has been extended from collections of sets to those of functions as well. Let $\mathcal{C}$ be a class of functions defined on a set $X$ which take values in a lattice $Y$. 
If $u,v:X\to Y$ we set $(u\lor v)(x):=u(x)\lor v(x)$ and $(u\land v)(x):= u(x)\land v(x)$ for all $x\in X$.

A map $\mu:\mathcal{C}\to \R$ is said to be a  (real valued) valuation if 
$$
\mu(u\lor v)+\mu(u\land v)=\mu(u)+\mu(v)
$$    
holds true for all $u,v\in\mathcal{C}$ such that $u\lor v,u\land v$ also belong to $\mathcal{C}$.
Usually, an additional property is required, namely that $\mu(\0)=0$ for a certain function $\0\in\mathcal{C}$; notice that the role of $\0$ is usually palyed by the constant function with constant equal to $0$ although some alternatives are also possible (see \cite{noi} for a case where the function constantly equal to $\infty$ is used).

Valuations defined on the Lebesgue spaces $L^p(\R^n)$ and $L^p(S^n)$, $1\le p<\infty$, were studied by Tsang in \cite{Tsang-2010}. The results of Tsang have been extended to
Orlicz spaces by Kone, in \cite{Kone}. 
Valuations of different types (taking values in $\K^n$ or in spaces of matrices, instead of $\R$),
defined on Lebesgue, Sobolev and BV spaces, have been considered in \cite{Tsang-2011}, 
\cite{Ludwig-2011a}, \cite{Ludwig-2012}, \cite{Ludwig-2013}, \cite{Wang-thesis}, \cite{Wang} and \cite{Ober}
(see also \cite{Ludwig-2011} for a survey).

\medskip

Other interesting results are due to Wright, that in his PhD Thesis \cite{Wright} and subsequently in collaboration with Baryshnikov and Ghrist in \cite{BGW}, studied a rather different class,
formed by the so-called {\em definable functions}. We will not present the details of the construction of these functions, but we mention that the main result
of these works is a characterization of valuation as suitable integrals of intrinsic volumes of level sets. 

In this paper we will focus on the space $\li(\R)$, i.e. the set of functions $u:\R\to \R$ which are measurable and moreover satisfy 
$$
\norm{u}_\infty := \inf \sett{a\in\R}{m\del{u^{-1}\del{a,\infty}}=0}<\infty, 
$$
where $m$ denotes the Lebesgue measure on the real line. 
We recall that $\norm{\cdot}_\infty$ is a semi-norm and it becomes a norm whenever functions that coincide almost everywhere with respect to the Lebesgue measure are identified. 
From now on we will always refer to the identified space and require valuations on $\li(\R)$ to vanish on the function $\0$ which is constantly equal to 0. 

The aim of this paper is to find a family of non-trivial examples of valuations $\mu$ on $\li(\R)$ which are translation-invariant (i.e. $\mu(u)=\mu(u\circ T)$ for all $u\in\li(\R)$ and all translations $T$ of the real line).

A. Tsang in \cite{Tsang-2010} provided a complete classification of translation-invariant continuous valuations on $L^p(\R^n)$ for $1\le p <\infty$. Indeed he proved that every translation-invariant continuous valuation $\mu:L^p\to\R$ can be written as 
$$
\mu(u)=\int_{\R^n} (h\circ u)(x)\dif x,
$$
where $h:\R\to\R$ is a suitable continuous function with $h(0)=0$ which is subject to a growth condition depending on the number $p$.

The case $p=\infty$ is extremely different from those treated by Tsang and this paper will convince you of that. 
It is well know that functions in $L^p(\R^n)$ can be approximated by a converging sequence of $L^p$-functions with compact support (and hence if $\mu:L^p(\R^n)\to\R$ is a continuous valuation, knowing how to compute $\mu$ on $L^p$-functions with compact support is equivalent to knowing $\mu$ itself).
The same thing is clearly not true for $\li(\R)$ in general. Moreover, a first important result of the present paper (see Proposition \ref{vanish}) consists in showing that translation-invariant valuations on $\li(\R)$ simply vanish on functions with compact support. 
  
This raises the question of whether we could actually exhibit an example of non-trivial translation-invariant valuation on $\li(\R)$ (a valuation is said to be {\em non-trivial} if it does not assign the value $0$ to every function). A non-constructive answer to this question will be given in sections from 4 to 8, where the axiom of choice is used to create translation-invariant extensions of the limit operator (called Banach limits) relying on a heavy use of ultrafilters.
It would indeed be interesting to prove whether any constructive example of such valuations can be given.  

Finally, the last section of this paper shows how the process described above could be adapted to work with valuations on $\li(\R^n)$ with $n>1$. \\

I would like to thank professor Andrea Colesanti for introducing me to the study of valuations and encouraging me to write this paper on my own while giving me precious advices.
 
\section{Preliminaries and notations}
As customary in mathematical analysis, $\N:=\set{1,2,\dots}$ will denote the set of natural numbers; moreover, the symbol ``$\sub$'' will be used for set-inclusion, while ``$\subset$'' is reserved for strict inclusion.
As we will mostly be working in dimension 1 (the $n$-dimensional case will only be treated in section \ref{ndim}), we will always use the simplified notation $\li$ instead of $\li(\R)$; moreover, given two functions $u,v\in\li$, we will write $u\equiv v$ to mean that they agree almost everywhere in $\R$ with respect to the Lebesgue measure. If $A\sub \R$, then $A^\complement $ will denote the complement $\R\setminus A$. For all real $c$, we set ${\bf c}$ to be the constant function ${\bf c}(x):=c$ for all $x\in\R$.

Let $A\sub \R$ and $u,v$ be real functions of one real variable; we will always write $\{u\in A\}$ instead of 
$\sett{x\in \R}{u(x)\in A}$ and $\{u=v\}$, $\{u\le v\}$ instead of $\sett{x\in\R}{u(x)=v(x)}$ and $\sett{x\in\R}{u(x)\le v(x)}$ respectively. 
 
Let $t\in\R$ and $u$ be a real function, then $\mathcal{T}_t u (x):= u(x-t)$, i.e. $\mathcal{T}_t u $ is the function whose graph is obtained translating that of $u$ by $t$ to the right; therefore we will say that a valuation $\mu:\li\to\R$ is translation-invariant whenever
$$
\mu(u)=\mu(\mathcal{T}_t u) \quad \forall u\in\li\, , \forall t\in\R.
$$
Given a valuation $\mu:\li\to\R$ we say that it is {\em monotone} if for all $u,v\in\li$ with $u\le v$ almost everywhere we have $\mu(u)\le \mu(v)$; moreover $\mu$ is said to be {\em continuous} if it is continuous with respect to the $\li$-metric, i.e. for all $\del{u_n}_{n\in\N}\subset \li$ such that $\Lim{n\to\infty} \norm{u_n-u}_\infty =0$ we have $\Lim{n\to\infty}\mu(u_n)=\mu(u)$.
 
We recall that the concept of limit at $\pm\infty$ can be definied for functions in $\li$ as follows:
$$
\lim_{x\to\infty} u(x)= l \quad \text{if} \quad \forall \eps>0\; \exists a\in\R :\; u(x)\in B_\eps (l) \text{ for almost every } x>a. 
$$
Limits for $x\to -\infty$ are defined by replacing the ``$x>a$'' above with ``$x<a$''.
Notice that such limits are not defined for all functions in $\li$ (extending them in a suitable way will be the main task of this paper); however, if $\Lim{x\to\infty} u(x)$ exists, it is equal to $\Lim{x\to\infty} \mathcal{T}_t u(x)$ for all real $t$ (in other words, limits are translation-invariant functionals).

Moreover, in order to aid the reader in understanding at a glance the hierarchical depth of the entities we use, throughout this paper we will stick to the following habit:
\begin{itemize}
\item real numbers will be denoted by lower case letters (e.g. $a,b,c,\eps,\dots$),

\item sets of real numbers (i.e. subsets of $\R$) will be denoted by Latin capital letters (e.g. A,B,C,\dots),

\item families of sets of real numbers (i.e. subsets of $\mathcal{P}(\R)$) will be denoted by Latin calligraphic capital letters  (e.g. $\mathcal{A},\mathcal{B},\mathcal{C},\dots$),

\item families of families of sets of real numbers (that is the deepest we will need to go in this paper, i.e. subsets of $\mathcal{P}(\mathcal{P}(\R))$) will be denoted by Latin script-style capital letters (e.g. $\mathscr{A},\mathscr{B},\mathscr{C},\dots $). 

\end{itemize}

\section{Isolating the tail parts}\label{isoltails}
In this section we will make rigorous what stated in the introduction, i.e. that translation-invariant valuations on $\li$ (unlike the related case concerning $L^p(\R^n)$ with $1\le p< \infty$) only care about the behaviour at infinity of the functions they are valuating.

We will say that two functions $u,v\in\li$ have disjoint support if $u\cdot v\equiv \0$. 
The following lemma shows that addition distributes with respect to $\lor$ and $\land$ when functions with disjoint support come into play.
\begin{lem}\label{distr}
If $u,v\in\li$ have disjoint support, then 
\begin{equation}\nonumber
\begin{split}
(u+v)\lor \0 \equiv u\lor \0 + v\lor \0, \\
(u+v)\land \0 \equiv u\land \0 + v\land \0. 
\end{split}
\end{equation}
\end{lem}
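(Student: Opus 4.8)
The plan is to reduce everything to a pointwise check valid almost everywhere. The hypothesis $u\cdot v\equiv\0$ means precisely that the set $N:=\{x\in\R : u(x)v(x)\neq 0\}$ is Lebesgue-null, i.e. for almost every $x$ at least one of $u(x)$, $v(x)$ vanishes. Since two functions in $\li$ that agree outside a null set represent the same element of the identified space, it suffices to verify the two claimed identities at every point of $N^\complement$.

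So first I would fix $x\notin N$ and split into the two (non-exclusive) cases $v(x)=0$ and $u(x)=0$. If $v(x)=0$, then $(u+v)(x)=u(x)$, hence $\big((u+v)\lor\0\big)(x)=u(x)\lor 0$, while on the other side $\big(u\lor\0\big)(x)+\big(v\lor\0\big)(x)=\big(u(x)\lor 0\big)+\big(0\lor 0\big)=u(x)\lor 0$; the two sides coincide. The case $u(x)=0$ is identical after swapping the roles of $u$ and $v$. This establishes the first identity pointwise on $N^\complement$, hence as an equivalence in $\li$.

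For the second identity I would either repeat the same two-case argument — if $v(x)=0$ then $\big((u+v)\land\0\big)(x)=u(x)\land 0=\big(u(x)\land 0\big)+\big(0\land 0\big)$, and symmetrically — or, more economically, derive it from the first by applying the already proved identity to the pair $-u,-v$ (which still has disjoint support) and using the elementary relation $(-w)\lor\0=-(w\land\0)$ together with linearity of the sum. Either route closes the proof.

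There is essentially no hard step here: the statement is a routine consequence of the pointwise definitions of $\lor$ and $\land$ and of the convention $\0\lor\0=\0\land\0=\0$. The only point that deserves a word of care is the passage from ``holds for every $x$ outside a null set'' to ``holds as an identity between elements of $\li$'', which is exactly the identification of a.e.-equal functions fixed in Section~2; everything else is immediate.
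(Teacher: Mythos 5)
Your proposal is correct and follows essentially the same route as the paper: a pointwise case analysis on the null-complement of $\{u\cdot v\neq 0\}$, checking the identity when $u(x)=0$ and when $v(x)=0$, then concluding the a.e.\ equality. The alternative derivation of the $\land$ identity from the $\lor$ one via $-u,-v$ is a valid minor shortcut, but the substance is identical.
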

\begin{proof}
Consider $x\in\R$ such that $u(x)=0$, hence
$$
\left(u(x)+v(x) \right)\lor 0 = v(x)\lor 0= u(x)\lor 0 + v(x)\lor 0.
$$
If we consider $x$ such that $v(x)=0$ we reach the same conclusion, then, as $\{u\ne 0\}\cap \\ \{v\ne 0\}$ is a null set, 
we have that 
$$
(u+v)\lor \0 \equiv u\lor \0 + v\lor \0.
$$
Distributivity with respect to $\land$ is analogous.
\end{proof}

It is also easy to prove that valuations on $\li$ behave like additive functionals when the functions considered have disjoint support.

\begin{lem}\label{ddd}
Let $u,v\in\li$ have disjoint support; let $\mu:\li\to\R$ be a valuation. Then
$$
\mu(u+v)=\mu(u)+\mu(v).
$$
\end{lem}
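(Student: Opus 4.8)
The plan is to reduce the statement to the case of functions of a fixed sign, where the lattice operations become explicit. The key preliminary observation is that applying the defining identity of a valuation to the pair $u$ and $\0$ gives $\mu(u\lor\0)+\mu(u\land\0)=\mu(u)+\mu(\0)=\mu(u)$; writing $u^+:=u\lor\0$ and $u^-:=u\land\0$, so that $u\equiv u^++u^-$, this says that $\mu$ is additive on the positive/negative part decomposition: $\mu(u)=\mu(u^+)+\mu(u^-)$ for every $u\in\li$.

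Next I would dispatch the two sign-definite cases. Suppose $f,g\ge 0$ have disjoint support. Then at almost every $x$ at least one of $f(x),g(x)$ vanishes, hence $f\lor g\equiv f+g$ and $f\land g\equiv\0$; the valuation identity together with $\mu(\0)=0$ gives $\mu(f+g)=\mu(f)+\mu(g)$. Symmetrically, if $f,g\le 0$ have disjoint support then $f\land g\equiv f+g$ and $f\lor g\equiv\0$, again yielding $\mu(f+g)=\mu(f)+\mu(g)$.

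Finally I would combine everything. Given $u,v\in\li$ with $u\cdot v\equiv\0$, the functions $u^+,v^+$ are nonnegative with disjoint support, and $u^-,v^-$ are nonpositive with disjoint support; by Lemma \ref{distr}, $(u+v)^+\equiv u^++v^+$ and $(u+v)^-\equiv u^-+v^-$. Hence $\mu(u+v)=\mu\big((u+v)^+\big)+\mu\big((u+v)^-\big)=\mu(u^++v^+)+\mu(u^-+v^-)=\mu(u^+)+\mu(v^+)+\mu(u^-)+\mu(v^-)=\mu(u)+\mu(v)$, as desired.

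The argument is short, and the only point that requires care is that every identity above involving supports is an almost-everywhere statement (``disjoint support'' means $u\cdot v\equiv\0$); since $\mu$ is defined on the space in which a.e.-equal functions are identified this causes no trouble, but one must avoid claiming pointwise equalities. I do not anticipate any real obstacle beyond this bookkeeping.
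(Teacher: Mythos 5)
Your proposal is correct and follows essentially the same route as the paper: first the sign-definite cases via $f\lor g\equiv f+g$, $f\land g\equiv\0$, then the general case by applying the valuation identity to the pair $(u+v,\0)$ and using Lemma \ref{distr} to split the positive and negative parts. The only cosmetic difference is that you isolate the identity $\mu(u)=\mu(u^+)+\mu(u^-)$ up front, whereas the paper unfolds it inline.
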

\begin{proof}
Let us first prove the lemma under the additional hypothesis that $u,v\ge 0$ almost everywhere.
Consider $x$ such that $u(x)\cdot v(x)=0$, without loss of generality we may assume that $u(x)\ge0$ and $v(x)=0$.
Hence 
\begin{equation}\label{+lor}
u(x)+v(x)=(u\lor  v)(x).
\end{equation}
As \eqref{+lor} holds true also when $u(x)=0$ and $v(x)\ge0$, we conclude that 
$u+v\equiv u\lor v$.
In a similar way one could prove that $u\land v\equiv \0$, hence by the valuation property of $\mu$,
$$
\mu(u+v)=\mu(u\lor v)=\mu(u)+\mu(v)-\mu(\0)=\mu(u)+\mu(v).
$$
The same conclusion can be drawn when $u,v\le 0$ almost everywhere (in this case $u\lor v\equiv 0$ and $u\land v \equiv u+v$).

We will now prove the lemma in the general case.
Let $u,v\in\li$ be functions with disjoint support, then 
$$
\mu(u+v)=\mu(u+v)+\mu(\0)=\mu\left( (u+ v)\lor \0\right)+\mu\left( (u+v)\land \0\right);
$$
by Lemma \ref{distr}, 
$$
\mu\left( (u+v)\lor \0 \right)+\mu\left( (u+v)\land \0 \right)= \mu\left( u\lor \0 + v \lor \0 \right)+ \mu\left( u\land \0 +v\land \0\right).
$$
Notice that, as $u,v$ have disjoint support, so do $u\lor \0,v\lor \0$ and $u\land\0,v\land \0$.
Thus, employing what we have proven so far,
$$
\mu\left( u\lor \0 + v \lor \0 \right)+ \mu\left( u\land \0 +v\land \0\right)= \mu(u\lor \0) + \mu(v\lor \0) + \mu(u\land \0) + \mu(v\land \0)= \mu(u)+\mu(v).
$$

\end{proof}

We recall that a function $u\in\li$ is said to have {\em compact support} if it vanishes almost everywhere outside a bounded interval.

\begin{prop} \label{vanish}
Translation-invariant valuations on $\li$ vanish on all functions with compact support.
\end{prop}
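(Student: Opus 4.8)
The plan is to exploit the translation-invariance together with Lemma \ref{ddd} to show that a compactly supported function can be cut into arbitrarily many pieces of equal $\mu$-value, forcing that value to be $0$.

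First I would reduce to a convenient normal form. Let $u\in\li$ be supported in a bounded interval, say $[0,\ell)$ after a translation (which does not change $\mu(u)$). For a positive integer $k$, consider the $k$ translates $u_j := \mathcal{T}_{j\ell}\,u$ for $j=0,1,\dots,k-1$; these have pairwise disjoint supports (the support of $u_j$ lies in $[j\ell,(j+1)\ell)$), and each satisfies $\mu(u_j)=\mu(u)$ by translation-invariance. Set $v_k := \sum_{j=0}^{k-1} u_j$. Applying Lemma \ref{ddd} inductively (the partial sum $\sum_{j=0}^{m-1}u_j$ and $u_m$ still have disjoint support) gives $\mu(v_k)=k\,\mu(u)$.

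Next I would show $\mu(v_k)$ stays bounded independently of $k$, which then forces $\mu(u)=0$ by letting $k\to\infty$. The point is that $v_k$ is, up to a null set, a single ``$\ell$-periodic pattern truncated to $k$ periods'', and any two such truncations are translates of each other plus an error supported in one period. Concretely, $v_{k+1}$ and $\mathcal{T}_\ell v_k + u = \mathcal{T}_\ell v_k + u_0$ differ only by reindexing and in fact $v_{k+1} = u_0 + \mathcal{T}_\ell v_k$, so by translation-invariance and Lemma \ref{ddd} (the supports of $u_0$ and $\mathcal{T}_\ell v_k$ are disjoint) we get $\mu(v_{k+1}) = \mu(u_0) + \mu(\mathcal{T}_\ell v_k) = \mu(u) + \mu(v_k)$, which merely re-derives $\mu(v_k)=k\mu(u)$. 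To get a genuine bound I instead compare $v_k$ with the genuinely periodic function; more cleanly, I would pick a second compactly supported function $w$ whose support is disjoint from that of $v_k$ for \emph{every} $k$ — this is impossible as stated since the supports grow — so the right move is the following: fix $k$ and split $v_{2k}$ in two ways. On one hand $\mu(v_{2k}) = 2k\,\mu(u)$. On the other hand, $v_{2k} = v_k + \mathcal{T}_{k\ell} v_k$, a sum of two disjointly supported translates of $v_k$, so $\mu(v_{2k}) = 2\mu(v_k) = 2k\,\mu(u)$ again — consistent but not yet a contradiction. The actual obstruction, and the step I expect to be the crux, is producing any a priori bound on $\{\mu(v_k)\}$. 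The resolution I would pursue: since $\mu$ need not be continuous we cannot use an $\li$-bound directly, so instead I would show $\mu$ is bounded on the set of functions supported in a \emph{fixed} bounded interval $[0,N\ell)$ by using that there are only finitely many ``building blocks'' up to the valuation identity — more precisely, by applying the valuation identity to $v_k\lor\mathbf{c}$-type decompositions to trap $\mu(v_k)$ between $\mu$ of functions built from a bounded number of translates, thereby getting $|\mu(v_k)|\le C$ with $C$ independent of $k$.

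Given that bound, the conclusion is immediate: $k|\mu(u)| = |\mu(v_k)| \le C$ for all $k\in\N$ forces $\mu(u)=0$. I would then remark that the compactly supported function was arbitrary, completing the proof. The main obstacle, to restate it, is step two: translation-invariance alone makes $\mu(v_k)$ grow linearly, so the whole proof hinges on finding an independent reason why $\mu(v_k)$ cannot grow — and I anticipate the author achieves this by a clever use of the valuation property (e.g. decomposing a ``staircase'' function so that adding one more step costs nothing, via an $\lor/\land$ identity with a shifted copy) rather than by any boundedness or continuity hypothesis, since none is assumed here.
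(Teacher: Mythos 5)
Your proposal has a genuine gap at exactly the point you yourself flag as the crux: you never produce the bound $|\mu(v_k)|\le C$, and none of the routes you sketch would yield it. Boundedness of $\mu$ on functions supported in a fixed interval is not available here (no continuity or monotonicity is assumed, and in any case your $v_k$ are supported in the growing intervals $[0,k\ell)$, not a fixed one), and the $\lor/\land$ manipulations you gesture at only reshuffle the identity $\mu(v_k)=k\mu(u)$, as your own computation with $v_{2k}$ already shows. So the argument as written stops one step short of a proof.

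The missing idea --- which you brush against when you say you would ``compare $v_k$ with the genuinely periodic function'' but then abandon --- is to work with the \emph{infinite} periodic prolongation directly. Set $\widetilde{u}:=\sum_{j=0}^{\infty}\mathcal{T}_{j\ell}\,u$; since the summands have pairwise disjoint supports and are uniformly bounded by $\norm{u}_\infty$, this is a genuine element of $\li$. Then $\widetilde{u}=u+\mathcal{T}_\ell\widetilde{u}$ with $u$ and $\mathcal{T}_\ell\widetilde{u}$ disjointly supported, so Lemma \ref{ddd} and translation-invariance give
\[
\mu(\widetilde{u})=\mu(u)+\mu(\mathcal{T}_\ell\widetilde{u})=\mu(u)+\mu(\widetilde{u}),
\]
and since $\mu(\widetilde{u})$ is a finite real number you may cancel it to conclude $\mu(u)=0$. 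No bound on $\{\mu(v_k)\}$ is needed: the ``absorption'' happens because the infinite periodic tail is literally a translate of the whole periodic function. This one-line swindle is precisely the paper's proof.
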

\begin{proof}
Let $\mu:\li\to\R$ be a translation-invariant valuation on $\li$ and let $u\in\li$ have compact support. 
Without loss of generality we might suppose that $u$ vanishes almost everywhere outside $(0,n)$ for some $n\in\N$.
We ``prolong $u$ periodically to the right'' in the following way:
$$
\widetilde{u}(x):= u(r_x) \quad \text{for all } x\in\R,
$$
where $r_x$ is the only real number in $[0,n)$ satisfying $x=mn+r_x$ for some natural number $m$.

Notice that $u$ and $\mathcal{T}_n\widetilde{u}$ have disjoint support (see Figure \ref{prol}), moreover
$$
u+ \mathcal{T}_n\widetilde{u}= \widetilde{u}.
$$
\begin{figure}[h]
    \centering
    \includegraphics[width=0.7\textwidth]{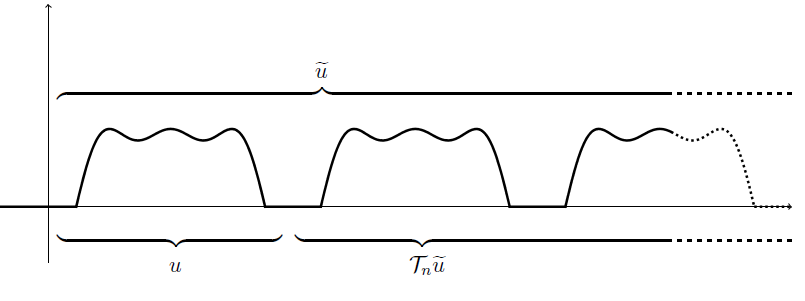}
   \caption{Prolonging $u$ periodically to the right}
\label{prol}
\end{figure}

As $\mu$ is a translation-invariant valuation, by Lemma \ref{ddd}
$$
\mu(\widetilde{u})=\mu(u+\mathcal{T}_n\widetilde{u})=\mu(u)+\mu(\mathcal{T}_n\widetilde{u})=\mu(u)+\mu(\widetilde{u}).
$$
We conclude that $\mu(u)=0$.

\end{proof}

Proposition \ref{vanish} tells us that translation-invariant valuations ``do not see'' what happens in a finite portion of $\R$ but rather only detect differences in asymptotic behaviours. 
We could also separate the two contributes of asymptotic behaviours at $+\infty$ and $-\infty$ respectively. 
Given a valuation $\mu$ on $\li$, we will introduce the follwing notation:
$$
\accentset{\rightarrow}{\mu}(u):= \mu(u\cdot \chi_{(0,\infty)})  \quad \text{and} \quad \accentset{\leftarrow}{\mu}(u):= \mu(u\cdot \chi_{(-\infty,0)}) \quad \forall u\in\li.
$$  
It is easy to prove that, if $\mu$ is a valuation, then also $\accentset{\rightarrow}{\mu}$ and $\accentset{\leftarrow}{\mu}$ are one (we will refer to them as the {\em right} and {\em left tail} of $\mu$, respectively). Moreover, if $\mu$ is translation-invariant, continuous or monotone, then its tails inherit these properties.

As, obviously, $\mu = \accentset{\rightarrow}{\mu}+\accentset{\leftarrow}{\mu}$ for every $\mu$, we could assume without loss of generality that $\mu$ coincides with its right tail.

\section{Banach limits of functions in $L^\infty$}
Banach limits were first introduced as a means to generalize the concept of limits of (bounded) real sequences.

The existence of a continuous, linear, shift-invariant functional $\phi: l^\infty \to \R$ (where $l^\infty$ denotes the set of bounded real sequences) that extends the usual limit is usually proven using the celebrated Hahn-Banach Theorem (see for instance \cite{hb}), and we refer to \cite{galvin} for a construction which employs the use of ultrafilters.
We emphasize that both constructions highly depend on the axiom of choice. 

In this section we will show that Banach limits could easily be defined on $L^\infty$ as well. Later in this paper Banach limits will be used to provide a family of non-trivial, monotone (or continuous), translation-invariant valuations on $L^\infty$.

We will now give the definition of a Banach limit for $L^\infty$. 
\begin{defi}[(right) Banach limit]\label{banlim}
A linear functional $\phi:L^\infty\to \R$ is called a (right) Banach limit if the following properties hold:
\begin{itemize}
\item $\phi(u)\ge 0$ for all $u\in L^\infty$ such that $u\ge 0$ almost everywhere, (positivity)
\item $\phi(\mathcal{T}_t u)=\phi(u)$ for all real $t$, (translation-invariance)
\item $\phi(u)= \Lim{x\to \infty} u(x)$ whenever the latter is defined. (extension of the limit operator)
\end{itemize}
\end{defi}
\begin{rmk}\emph{
Left Banach limits might also be defined replacing the third property with $\phi(u)= \Lim{x\to -\infty} u(x)$ (i.e. left Banach limits extend limits to $-\infty$ of arbitrary $L^\infty$ functions).} 
\end{rmk}

An easy consequence of Definition \ref{banlim} is that Banach limits are continuous functionals (indeed, they belong to the dual, $({L^\infty})^*$).
\begin{prop} Banach limits are continuous
\end{prop}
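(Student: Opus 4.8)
The plan is to show that a right Banach limit $\phi:\li\to\R$ is a bounded linear functional, which for a linear map is equivalent to continuity. The key observation is that positivity together with the requirement that $\phi$ extends the limit operator forces $\phi$ to be \emph{order-preserving} in the following sense: if $u\le v$ almost everywhere, then $v-u\ge 0$ almost everywhere, so by positivity and linearity $\phi(v)-\phi(u)=\phi(v-u)\ge 0$, i.e. $\phi(u)\le\phi(v)$. This monotonicity is the engine of the whole argument.

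Next I would exploit monotonicity to sandwich an arbitrary $u\in\li$ between two constant functions. For any $u\in\li$ we have $-\norm{u}_\infty\le u(x)\le\norm{u}_\infty$ for almost every $x$, that is, $\mathbf{-\norm{u}_\infty}\le u\le\mathbf{\norm{u}_\infty}$ almost everywhere. Applying the order-preserving property gives $\phi(\mathbf{-\norm{u}_\infty})\le\phi(u)\le\phi(\mathbf{\norm{u}_\infty})$. Since a constant function $\mathbf{c}$ has $\Lim{x\to\infty}\mathbf{c}(x)=c$, the third property of Definition \ref{banlim} yields $\phi(\mathbf{c})=c$; combined with linearity this gives $\phi(\mathbf{c})=c\,\phi(\mathbf 1)=c$. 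Hence $-\norm{u}_\infty\le\phi(u)\le\norm{u}_\infty$, i.e. $|\phi(u)|\le\norm{u}_\infty$ for all $u\in\li$.

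This bound says precisely that $\phi$ is a bounded linear functional of norm at most $1$, so $\phi\in(\li)^*$ and in particular $\phi$ is continuous: if $\norm{u_n-u}_\infty\to0$ then $|\phi(u_n)-\phi(u)|=|\phi(u_n-u)|\le\norm{u_n-u}_\infty\to0$. There is no serious obstacle here; the only point requiring a moment's care is the derivation of $\phi(\mathbf c)=c$ from the axioms — one must use the extension-of-the-limit property on constants (or alternatively positivity applied to $\mathbf c-u$ and $u-\mathbf c$ in the limiting cases) rather than assuming it — but once that is in place the estimate $|\phi(u)|\le\norm{u}_\infty$ and hence continuity follow immediately.
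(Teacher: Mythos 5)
Your proof is correct and follows essentially the same route as the paper: positivity plus linearity give the bound $\phi(u)\le\phi(\mathbf{\norm{u}_\infty})$ (and its mirror), the extension-of-the-limit property evaluates $\phi$ on constants, and the resulting estimate $|\phi(u)|\le\norm{u}_\infty$ yields continuity.
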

\begin{proof}
Let $u\in L^\infty$, if we set $m:=\norm{u}_\infty$ we have that $-\boldsymbol{m}\le u\le \boldsymbol{m}$ almost everywhere.
That implies that $\boldsymbol{m}-u\ge 0$ almost everywhere. Positivity of $\phi$ yields $\phi(\boldsymbol{m}-u)\ge 0$, which in turn implies (by linearity) that $\phi(\boldsymbol{m})\ge \phi(u)$. Then, as $\phi(\boldsymbol{m})=m$ (remember that $\phi$ extends the usual limit operator), we get $m\ge \phi(u)$. 
Proceeding in an analogous way we get $-m\le \phi(u)\le m$ which can be rewritten as $|\phi(u)|\le m=\norm{u}_\infty$. We conclude that $\phi$ is a continuous linear functional as claimed. 
\end{proof}
 
Sections \ref{lebult>ulim}, \ref{ulim>banlim} and \ref{existuf} are devoted to carefully showing  that there actually extists a linear functional satisfying the properties listed in Definition \ref{banlim}. 
The first step in this process consists in dividing the family of subsets of $\R$ into two disjoint families of ``small sets'' and ``large sets''.

\begin{defi}[Filter on $\R$]
A family $\mathcal{F}$ of subsets of $\R$ is called a \emph{filter} if the following holds:
\begin{itemize}
\item[{\bf F1)}] $\R\in \F$,
\item[{\bf F2)}] $\emptyset\notin \F$,
\item[{\bf F3)}] $\forall A,B\in\F$ we have $A\cap B\in \F$,
\item[{\bf F4)}] $\forall A\in \F$ and $\forall B\sub \R$ with $A\sub B$ then $B\in\F$. 
\end{itemize} 
\end{defi}  
A filter on $\R$ might be thought as a family of ``large sets''.
Consider a filter $\F$ and an arbitrary $A\sub \R$: it is immediate to realize that $A$ and $A^\complement$ cannot both belong to $\F$, because, if this were the case we would get $\emptyset =A\cap A^\complement \in \F$, which is a contradiction (by {\bf F2}); on the other hand it could happen that neither $A$ nor $A^\complement$ belong to $\F$ (consider for instance the trivial filter $\{\R\}$). 
\begin{defi}[Ultrafilter on $\R$]
A filter $\U$ on $\R$ is said to be an \emph{ultrafilter} if, for all $A\sub \R$, either $A\in\U$ or $A^\complement \in\U$.
\end{defi}
From now on the words ``(ultra)filter'' will be used as a synonym of ``(ultra)filter \underline{on $\R$}'' unless otherwise specified. 

In the present paper we are interested in a particular class of ultrafilters, namely ``right Lebesgue-ultrafilters''.

\begin{defi}[Lebesgue, right and left ultrafilters]
An ultrafilter $\U$ is a Lebesgue-ultrafilter if, for all Lebesgue-null set $N\sub \R$ we have $N^\complement \in\U$.
Moreover, an ultrafilter $\U$ is said to be a right (respectively, \emph{left}) ultrafilter if $(a,\infty)$ (respectively, $(-\infty,a)$) belongs to $\U$ for all $a\in\R$.
\end{defi} 
\begin{rmk}{\em
No ultrafilter can be simultaneously right and left, because of {\bf F3} and {\bf F2}.
}
\end{rmk}
Notice that the existence of right Lebesgue-ultrafilters (and actually of ultrafilters in general) cannot be taken for granted (as it will be shown in section \ref{existuf}, some more definitions and Zorn's lemma are essential to prove it).
Let us leave the problem concerning the existence of right Lebesgue-ultrafilters aside for a moment and let us assume that there actually exists at least one (which will be called $\U$), under this assumption it is fairly easy to create a linear functional that satisfies all the properties of Definition \ref{banlim} except for translation-invariance if we assume the existence of a right Lebesgue ultrafilter.

\section{Construction of ultralimits through Lebesgue-ultrafilters}\label{lebult>ulim}
\begin{defi}[(Right) ultralimits on $L^\infty$]\label{ulimdef}
Let $\U$ be a (right) Lebesgue-ultrafilter and let $u\in L^\infty$.
If there exists a real number $l$ such that
$$\forall \eps>0 \text{ there holds } \{x\in\R \mid |u(x)-l|<\eps \}=\{u \in B_\eps(l)\}\in \U,$$
we say that $l$ is the \emph{(right) ultralimit of $u$ with ultrafilter $\U$} (\emph{$\U$-limit of $u$}, for short), and we write
$$\lim_\U u=l.$$ 
\end{defi}

At this point we cannot conclude yet that, given a Lebesgue-ultrafilter, the corresponding ultralimit is well-defined (i.e. for every $u\in L^\infty$ there exists one and only one $l\in\R$ such that $\Lim{\U}u=l$, which is moreover independent of the representative chosen).

\begin{prop} Given a Lebesgue-ultrafilter $\U$, the functional $$\lim_\U:L^\infty\to\R$$ introduced in Definition \ref{ulimdef} is well defined.
\end{prop}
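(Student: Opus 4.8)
The plan is to establish three things for a fixed Lebesgue-ultrafilter $\U$: (i) for every $u\in\li$ there is at least one candidate $\U$-limit, (ii) such a candidate is unique, and (iii) the value does not depend on the representative chosen in the equivalence class of $u$. I would actually handle (iii) first, since it is the cheapest: if $u\equiv u'$ then $N:=\{u\ne u'\}$ is Lebesgue-null, so by the Lebesgue property $N^\complement\in\U$; hence for any $l$ and any $\eps>0$ the sets $\{u\in B_\eps(l)\}$ and $\{u'\in B_\eps(l)\}$ differ only inside $N$, and since $N^\complement\in\U$ and $\U$ is closed under supersets and finite intersections, one of these sets lies in $\U$ if and only if the other does. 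Thus $\lim_\U u = l$ iff $\lim_\U u' = l$, so the definition descends to the identified space.

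For uniqueness (ii), suppose $l_1\ne l_2$ were both $\U$-limits of $u$. Pick $\eps>0$ with $B_\eps(l_1)\cap B_\eps(l_2)=\emptyset$; then $\{u\in B_\eps(l_1)\}\in\U$ and $\{u\in B_\eps(l_2)\}\in\U$, so by {\bf F3} their intersection is in $\U$, but that intersection is $\emptyset$, contradicting {\bf F2}. So there is at most one $\U$-limit.

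The substantive step is existence (i), which I expect to be the main obstacle. The idea is a bisection/compactness argument exploiting that $u$ is essentially bounded: set $m:=\norm{u}_\infty$, so the essential range of $u$ is contained in $[-m,m]$, and more precisely $\{u\in[-m,m]\}$ differs from $\R$ by a null set, hence (Lebesgue property) lies in $\U$. Now split $[-m,m]$ into two closed halves $I',I''$. Their preimages $\{u\in I'\}$ and $\{u\in I''\}$ have union in $\U$; I claim at least one of them is in $\U$. This uses a standard ultrafilter fact: if $A\cup B\in\U$ then $A\in\U$ or $B\in\U$ — indeed if neither is in $\U$ then (by the ultrafilter dichotomy) $A^\complement,B^\complement\in\U$, so $A^\complement\cap B^\complement=(A\cup B)^\complement\in\U$, contradicting {\bf F3}/{\bf F2} with $A\cup B\in\U$. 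Choosing a half whose preimage is in $\U$ and iterating, I obtain a nested sequence of closed intervals $I_0\supseteq I_1\supseteq\cdots$ with $|I_k|\to 0$ and $\{u\in I_k\}\in\U$ for all $k$; let $\{l\}=\bigcap_k I_k$. Then given $\eps>0$, choose $k$ with $I_k\sub B_\eps(l)$; since $\{u\in I_k\}\sub\{u\in B_\eps(l)\}$ and the former is in $\U$, property {\bf F4} gives $\{u\in B_\eps(l)\}\in\U$. Hence $\lim_\U u=l$. The only delicate point is making sure each step of the bisection stays inside $\U$, which is exactly what the ``$A\cup B\in\U\Rightarrow A\in\U$ or $B\in\U$'' lemma guarantees; everything else is routine. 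Combining (i)--(iii) shows $\lim_\U$ is a well-defined functional $\li\to\R$.
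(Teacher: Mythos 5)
Your proof is correct. The uniqueness and representative-independence steps are essentially identical to the paper's (disjoint balls plus {\bf F2}/{\bf F3} for uniqueness; intersecting with the co-null set $\{u=u'\}\in\U$ and applying {\bf F4} for independence of the representative). Where you genuinely diverge is in the existence step. The paper argues by contradiction with an open-cover compactness argument: assuming no admissible limit exists, it extracts a finite subcover $\{B_{\eps_{l_i}}(l_i)\}$ of $[-b,b]$, notes $\{u\in[-b,b]\}\in\U$, and derives a contradiction because a finite union lying in $\U$ must have one member in $\U$. You instead run a direct bisection: repeatedly halve $[-m,m]$, keep a half whose preimage stays in $\U$, and identify the limit as the unique point of the resulting nested intervals. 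Both arguments hinge on the same ultrafilter fact ($A\cup B\in\U$ forces $A\in\U$ or $B\in\U$, which you correctly derive from the dichotomy $A\in\U$ or $A^\complement\in\U$; the paper invokes it as ``the dual of {\bf F3}''), and both ultimately exploit compactness of $[-m,m]$, but yours ``constructs'' the limit (modulo the choices made in the bisection) rather than refuting its nonexistence, at the price of needing the nested-interval property explicitly; the paper's version is slightly shorter but purely negative. One small point to keep in mind: your base case $\{u\in[-m,m]\}\in\U$ uses that its complement is Lebesgue-null together with the Lebesgue property of $\U$, exactly as in the paper's preliminary boundedness claim, so nothing is lost.
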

\begin{proof} 

Let us prove uniqueness first.
Suppose that there exists a function $u$ in $L^\infty$ such that 
$$ \lim_\U u= l_1 \text{ and } \lim_\U u = l_2 \text{\quad with } l_1\ne l_2.$$
As $l_1\ne l_2$ there exists $\eps>0$ small enough such that $B_\eps (l_1)\cap B_\eps (l_2)=\emptyset$.
Now, since both $l_1$ and $l_2$ are $\U$-limits of $u$ we have that 
$$
\{u\in B_\eps(l_1)\}\in \U,  \quad \{u\in B_\eps(l_2)\}\in \U.
$$
By {\bf F3}  we have

$$
\U\ni \{u\in B_\eps (l_1)\}\cap \{u\in B_\eps (l_2)\}= \{u\in B_\eps(l_1)\cap B_\eps (l_2)\}=\{u\in \emptyset\}=\emptyset;
$$
which is a contradiction.

Before proving existence, we will show that the set of admissible $\U$-limits of a fixed $u$ is a bounded set (although, as far as we know at this stage, it might be empty!).
Consider $u\in L^\infty$, by definition there exists $b>0$ such that $\{u\notin [-b,b]\}$ is a Lebesgue-null set. 
We claim that, if $\Lim{\U} u = l$ then $l\in [-b, b]$. 
Suppose that $\Lim{\U} u = l $ with $l\notin [-b,b]$ instead, therefore we can find $\eps>0$ small enough such that $B_\eps (l)\cap [-b,b]=\emptyset$.
As $\Lim{\U} u = l $ we have that $\{u\in B_\eps (l)\}\in \U$; moreover $\{u\in [-b,b]\}\in\U$ (because $\{u\in [-b,b]\}^\complement$ has Lebesgue measure zero and $\U$ is a Lebesgue-ultrafilter).
Putting all this together we get 
$$
\U\ni \{u\in B_\eps  (l)\}\cap \{u\in [-b,b]\}=\emptyset;
$$
contradiction.

We are ready to prove existence. Consider $u\in \li$ and $b>0$ as before. Suppose that the set of all admissible $\U$-limits of $u$ is empty. From the previous considerations this can be restated as
\begin{equation}\label{noneee}
\forall l \in [-b,b] \; \exists \eps_l>0 : \quad \{u\in B_{\eps_l}(l)\}\notin\U.
\end{equation}
As $[-b,b]$ is compact, we can extract a finite family of $l$'s, namely $l_1,\dots, l_n$ such that
$$
[-b,b]\sub \bigcup_{i=0}^n B_i,
$$
where we have set $B_i:=B_{\eps_{l_i}}(l_i)$.
As $\{u\in [-b,b]\}$ is the complement of a null set, we have that $\{u\in [-b,b]\}\in \U$ and thus, by {\bf F4}, $\{u\in\bigcup_{i=1}^n B_i\}\in \U$.
Hence $\bigcup_{i=1}^n\{u\in B_i\}\in\U$, which in turn implies that one of the sets $\{u\in B_i\}$ ($i=1,\dots,n$) belongs to $\U$ (this is due to the dual of {\bf F3}).
That is a contradiction, as we had supposed, by \eqref{noneee} that none of the $\{u\in Bi\}$'s  was a member of $\U$.

Finally, let $u\equiv v$ (in particular $\{u=v\}\in\U$).
Suppose $\Lim{\U} u = l$; we claim that also $\Lim{\U} v=l$.
Take an arbitrary $\eps>0$. We have that $\{u\in B_\eps (l)\}\in\U$. We need to show that also $\{v\in B_\eps (l)\}\in\U$. As a matter of fact 
$$
\{v\in B_\eps (l)\}\supseteq \{v\in B_\eps (l)\}\cap \{v=u\}= \overbrace{\underbrace{\{u\in B_\eps (l)\}}_{\in\U}\cap \underbrace{\{v=u\}}_{\in\U}}^{\in\U};
$$ 
thus $\{v\in B_\eps (l)\}\in\U$ (by {\bf F4}) and therefore $\Lim{\U} v= l$ as claimed.
\end{proof}

Having proven that ultralimits are well defined, we are ready to show some basic properties, namely linearity, positivity and extension of the usual limit.

\begin{prop} 
Let $\U$ be a right Lebesgue-ultrafilter. Then
\begin{itemize}
\item $\Lim{\U} (\alpha u+\beta v)=\alpha\, \Lim{\U} u + \beta\, \Lim{\U} v \quad \forall \alpha,\beta \in\R \; \forall u,v\in\li$ (linearity);
\item $\Lim{\U} u\ge 0$ if $u\ge 0$ almost everywhere (positivity);
\item $\Lim{\U} u = \Lim{x\to \infty} u(x)$ if the latter exists (extension of the usual limit).
\end{itemize}
\end{prop}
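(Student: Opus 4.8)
The plan is to verify the three asserted properties of $\Lim{\U}$ one at a time, in each case reducing the claim to the defining membership condition and then exploiting the filter axioms {\bf F3} and {\bf F4} together with the Lebesgue property of $\U$. Throughout, the guiding principle is that a real number $l$ is the $\U$-limit of $w$ precisely when every set of the form $\{w\in B_\eps(l)\}$ is ``large'' (belongs to $\U$), and that finite intersections of large sets are large while supersets of large sets are large.

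\textbf{Linearity.} First I would set $l:=\Lim{\U} u$ and $m:=\Lim{\U} v$, which exist by the previous proposition, and show $\Lim{\U}(\alpha u+\beta v)=\alpha l+\beta m$. Fix $\eps>0$. If $\alpha,\beta\neq 0$, pick $\eps'>0$ with $(|\alpha|+|\beta|)\eps'\le \eps$; then on the set $\{u\in B_{\eps'}(l)\}\cap\{v\in B_{\eps'}(m)\}$ one has $|(\alpha u+\beta v)-(\alpha l+\beta m)|\le|\alpha|\eps'+|\beta|\eps'\le\eps$ pointwise, so this set is contained in $\{\alpha u+\beta v\in B_\eps(\alpha l+\beta m)\}$. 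The two sets being intersected lie in $\U$ by definition of $l$ and $m$; hence their intersection lies in $\U$ by {\bf F3}, and the larger set lies in $\U$ by {\bf F4}. Since $\eps>0$ was arbitrary, $\Lim{\U}(\alpha u+\beta v)=\alpha l+\beta m$. The degenerate cases $\alpha=0$ or $\beta=0$ are handled by the trivial observation that $\Lim{\U}(\boldsymbol{0})=0$ (indeed $\{\boldsymbol 0\in B_\eps(0)\}=\R\in\U$ by {\bf F1}) together with the single-function estimate, so I would dispatch them in one line rather than belabor them.

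\textbf{Positivity.} Suppose $u\ge 0$ almost everywhere and let $l:=\Lim{\U} u$. If $l<0$, choose $\eps>0$ with $B_\eps(l)\subset(-\infty,0)$. Then $\{u\in B_\eps(l)\}\in\U$, but this set is contained in $\{u<0\}$, which is a Lebesgue-null set; hence its complement lies in $\U$ because $\U$ is a Lebesgue-ultrafilter. Intersecting gives $\emptyset\in\U$, contradicting {\bf F2}. Therefore $l\ge 0$.

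\textbf{Extension of the usual limit.} Suppose $\Lim{x\to\infty}u(x)=l$ in the sense recalled in Section~2, and let me show $\Lim{\U}u=l$. Fix $\eps>0$; by definition of the limit at infinity there is $a\in\R$ such that $u(x)\in B_\eps(l)$ for almost every $x>a$, i.e. the set $\{u\in B_\eps(l)\}$ contains $(a,\infty)$ up to a Lebesgue-null set, say $\{u\in B_\eps(l)\}\supseteq (a,\infty)\setminus N$ with $m(N)=0$. Since $\U$ is a \emph{right} ultrafilter, $(a,\infty)\in\U$, and since $\U$ is Lebesgue, $N^\complement\in\U$; hence $(a,\infty)\cap N^\complement\in\U$ by {\bf F3}, and this set is contained in $\{u\in B_\eps(l)\}$, which therefore lies in $\U$ by {\bf F4}. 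As $\eps$ was arbitrary, $\Lim{\U}u=l$.

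None of the three parts presents a serious obstacle; the only point requiring care is bookkeeping with the ``up to a null set'' qualifiers in the third part, since the limit at infinity and ultrafilter membership are both only ``almost everywhere'' notions and one must intersect with $N^\complement$ at the right moment to stay inside $\U$. I would make sure that each appeal to a filter axiom is explicitly flagged, as in the proofs already given, so that the reader can see the mechanism is identical each time: reduce to a membership statement, produce a small ``large'' set inside the target, apply {\bf F3} and {\bf F4}.
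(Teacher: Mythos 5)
Your proof is correct and follows essentially the same route as the paper's: reduce each claim to a membership statement, exhibit a set in $\U$ inside the target via {\bf F3}, and conclude by {\bf F4} (with the Lebesgue property handling null sets in the positivity and extension parts). The only cosmetic difference is that you establish linearity for the full combination $\alpha u+\beta v$ in one estimate, whereas the paper splits it into additivity and scalar homogeneity; the mechanism is identical.
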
 
\begin{proof}

Let us prove additivity first.
Let $u,v\in\li$ and set $\Lim{\U} u=:l_1$, $\Lim{\U} v=: l_2$. 
Take an arbitrary $\eps >0$, then 
$$
\{u+v\in B_\eps(l_1+l_2)\}\supseteq \underbrace{\{u\in B_{\eps/2}(l_1)\}}_{\in\U}\cap \underbrace{\{v\in B_{\eps /2}(l_2)\}}_{\in \U};
$$
therefore $\{u+v\in B_\eps (l_1+l_2)\}\in\U$ and additivity is proven.
Consider now $u\in\li$ and $\alpha \in\R$. Set $\Lim{\U} u = l$; we want to show that $\Lim{\U} (\alpha u)= \alpha\, \Lim{\U} u$.
The case $\alpha =0$ will be treated separatedly: 
$$\lim_\U 0\, u = \lim_\U \boldsymbol{0}=0=0\, \lim_\U u;$$
as a matter of fact, $\forall \eps>0$ we have $\{\boldsymbol{0}\in B_\eps (0)\}=\R\in\U$.
Let now $\alpha \ne 0$. For all $\eps>0$ 
$$\{\alpha u\in B_\eps (\alpha l)\}= \{u\in B_{\eps /\alpha }(l)\}\in\U.$$

Let us now prove positivity.
Take $u\in\li$ with the property that $u\ge 0$ almost everywhere and suppose $\Lim{\U} u =l<0$.
There exists $\eps>0$ small enough such that $B_\eps (l)\sub (-\infty,0)$.
Therefore 
$\{u\in B_\eps (l)\}\sub \{u\in (-\infty,0)\}$. As $u\ge 0$ almost everywhere, being $\U$ a Lebesgue ultrafilter, the set $\{u\in (-\infty, 0)\}$ does not belong to $\U$ (hence neither does $\{u\in B_\eps (l)\}$).

Finally we prove that $\Lim{\U}$ extends the usual limit. 
Let $u\in\li$ be a function such that $\Lim{x\to\infty}u(x)$ exists and is finite (we will call it $l$). We claim that also $\Lim{\U} u = l$.
Notice that for all $\eps>0$ there exists $a\in\R$  such that 
\begin{equation}\label{limmite}
|u(x)-l| <\eps \quad\text{for almost every }x> a.
\end{equation}
In other words $\{u\in B_\eps (l)\}\supseteq (a,\infty)\cap N^\complement$, where $N$ is the set where \eqref{limmite} fails to be true.
As $N$ is a null set, $N^\complement\in\U$, therefore 
$$
\underbrace{(a,\infty) \cap N^\complement}_{\in\U}\sub \{u\in B_\eps (l)\}\in\U. 
$$
\end{proof}

\begin{rmk}\em{
One might be tempted to define ``bilateral'' ultrafilters in order to extend limits as $|x|\to\infty$.
In fact, if we define an ultrafilter $\U$ to be {\em bilateral} whenever 
$$
\U\supset \sett{(a,b)^\complement}{ a,b\in\R, \; a < b},
$$
then ultralimits along bilateral Lebesgue-ultrafilters do actually extend the limit operator as $|x|\to\infty$.
The reason why bilateral ultrafilters are not interesting {\em per se} is that there actually does not exist a single bilateral ultrafilter which is neither a right or a left one. To prove it, consider a bilateral ultrafilter $\U$ and assume without loss of generality that $(0,\infty)\in\U$ (notice that either $(0,\infty)$ or $(-\infty,0)$ belongs to $\U$). This implies that every set of the form $(a,\infty)$ belongs to $\U$ (and hence $\U$ is a right ultrafilter). The assert is trivial if $a\le 0$; on the other hand if $a>0$ we have 
$$
(0,a]\cup (a,\infty)=(0,\infty)\in\U,
$$
which can happen only if $(a,\infty)\in\U$ (remember that $(0,a]\notin\U$ as $\U$ is bilateral).
Analogously, if $(-\infty,0)\in\U$, we deduce that $\U$ is a left ultrafilter. 
}
\end{rmk}

\begin{rmk}\label{notransinv}\em{
Notice that ultralimits are in general {\bf not} translation-invariant.

Let, for instance, 
$$
A:= \bigcup_{\mathbb{Z}\ni k \text{ odd}} [k,k+1),
$$
and consider its characteristic function 
$u:=\chi_A$.
It is clear that $u$ is a 2-periodic function, alternatively assuming the values 0 and 1. 
It can be proven that $\Lim{\U} u$ is either equal to 0 or 1 (although the exact value of the limit depends on the choice of the ultrafilter $\U$).
Indeed, $\forall l \notin \{0,1\}$ we can find $\eps >0$ small enough such that $B_\eps(l)\cap \{0,1\}=\emptyset$. We conclude that $\{u\in B_\eps (l)\}=\emptyset\notin \U$, hence $\Lim{\U} u \ne l$.
Even though we cannot determine a priori wheter $\Lim{\U} u$ equals 0 or 1, it can be shown that $\Lim{\U} \mathcal{T}_1 u \ne \Lim{\U} u$. 
Notice that $\mathcal{T}_1 u={\bf 1}-u$. This observation yields 
\begin{equation}\nonumber
\lim_\U u=\lim_\U \mathcal{T}_1 u = \lim_\U ({\bf 1}-u)= 1-\lim_\U u. 
\end{equation}
In other words, in order for the two limits to coincide, they both must be equal to $1/2$ (which contradicts $\Lim{\U} u \in \{0,1\}$).
}
\end{rmk}

\section{Construction of Banach limits through ultralimits}\label{ulim>banlim}
As we have noted in Remark \ref{notransinv}, ultralimits (as defined in Definition \ref{ulimdef}) are not Banach limits (in the sense of Definition \ref{banlim}) because they fail to be translation-invariant.

Here we will introduce a way to construct Banach limits using ultralimits. This technique is an adaptation to the continuum case of a standard construction of Banach limits for number sequences by means of ultrafilters on $\N$ (we refer to \cite{jerison} for more details).

First we will need the following preliminary construction.

\begin{defi}[Ces\`aro-like average of a real function]
Let $u:\R\to\R$ be a bounded measurable function. We define its Ces\`aro-like average $\widehat{u}:\R\to\R$ as 
\begin{equation}\nonumber
\widehat{u}(x) := 
\begin{cases}
u(0)  & \text{if }x=0, \\
\frac{1}{x} \int_0^x u(y) \dif y & \text{if } x\ne 0;
\end{cases}
\end{equation}
where we used the convention $\int_a^b f(y) \dif y:=-\int_{[b,a]}f(y) \dif y$ if $b<a$.
\end{defi}
\begin{rmk}\label{cesind}
\em{
Since the boundedness of $u$ implies that also $\widehat{u}$ is bounded (we can say more: in fact if $|u|\le m$ almost everywhere for some $m>0$, we can show that $|\widehat{u}|$ is bounded almost everywhere by the same constant), we can assert that Ces\`aro-like averages induce a continuous linear operator  $\widehat{\cdot}: \li\to\li$ in the obvious way.
}
\end{rmk}
The following lemma introduces some interesting properties of Ces\`aro-like averages.
\begin{lem} \label{cesprop} If $u\in\li$, then $u\ge 0$ almost everywhere implies $\widehat{u}\ge 0$ almost everywhere.
Moreover, if $\Lim{x\to\infty} u(x)=l$ then also $\Lim{x\to\infty} \widehat{u}(x)=l$.
\end{lem}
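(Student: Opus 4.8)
The plan is to prove the two assertions of Lemma \ref{cesprop} separately, both by elementary computations with the integral defining $\widehat u$.

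\medskip

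\noindent\textbf{Positivity.} First I would handle the claim that $u\ge 0$ almost everywhere implies $\widehat u\ge 0$ almost everywhere. For $x>0$ this is immediate: $\widehat u(x)=\frac1x\int_0^x u(y)\,\dif y$ is the average of a nonnegative (a.e.) integrand over a positive-length interval, hence nonnegative. For $x<0$ one must be careful with the sign convention: by definition $\widehat u(x)=\frac1x\int_0^x u(y)\,\dif y = \frac1x\Bigl(-\int_{[x,0]} u(y)\,\dif y\Bigr) = \frac{1}{-x}\int_{[x,0]}u(y)\,\dif y$, which is again the genuine average of a nonnegative function over an interval of positive length $-x$, hence $\ge 0$. (The single point $x=0$ is irrelevant for an almost-everywhere statement, so the case $\widehat u(0)=u(0)$ needs no discussion here.) Thus $\widehat u\ge 0$ a.e.

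\medskip

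\noindent\textbf{Preservation of the limit at $+\infty$.} Now suppose $\Lim{x\to\infty}u(x)=l$; I want $\Lim{x\to\infty}\widehat u(x)=l$. Fix $\eps>0$. By definition of the limit in $\li$ there is $a>0$ and a null set $N$ such that $|u(y)-l|<\eps/2$ for all $y>a$ with $y\notin N$. For $x>a$ I split
\[
\widehat u(x)-l=\frac1x\int_0^x\bigl(u(y)-l\bigr)\,\dif y=\frac1x\int_0^a\bigl(u(y)-l\bigr)\,\dif y+\frac1x\int_a^x\bigl(u(y)-l\bigr)\,\dif y.
\]
The second term is bounded in absolute value by $\frac1x\int_a^x|u(y)-l|\,\dif y\le \frac{x-a}{x}\cdot\frac\eps2\le\frac\eps2$, since the values on $N$ do not affect the integral. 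The first term has absolute value at most $\frac1x\int_0^a|u(y)-l|\,\dif y\le\frac1x\bigl(a(\norm{u}_\infty+|l|)\bigr)$, which tends to $0$ as $x\to\infty$; hence there is $a'>a$ with this term $<\eps/2$ for all $x>a'$. Combining, $|\widehat u(x)-l|<\eps$ for all $x>a'$, which is exactly $\Lim{x\to\infty}\widehat u(x)=l$.

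\medskip

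\noindent I do not expect any genuine obstacle here; the only points requiring a little care are (i) getting the sign convention right in the $x<0$ case of the positivity statement, and (ii) being explicit that the exceptional null set $N$ from the definition of the $\li$-limit contributes nothing to the integral estimates, so that the argument is really about the essential behaviour of $u$ rather than its pointwise values. Both are routine.
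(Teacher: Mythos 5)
Your proof is correct and follows essentially the same route as the paper's: the positivity claim is handled by the identical sign analysis for $x>0$ and $x<0$, and the limit claim by the same splitting of $\frac1x\int_0^x$ at the threshold $a$, bounding the head by a term of order $1/x$ and the tail by the $\eps$-closeness of $u$ to $l$. The only cosmetic difference is that you combine the two bounds via an $\eps/2$ argument where the paper passes to $\liminf$/$\limsup$ and invokes the arbitrariness of $\eps$; your explicit tracking of the exceptional null set $N$ is a welcome touch of care but changes nothing substantive.
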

\begin{proof}
Let $u\ge 0$ almost everywhere. 
If $x>0$ then $\frac{1}{x}\int_0^x u(y) \dif y \ge 0$ as the integrand is non-negative almost everywhere, the limits of integration are ordered correctly (i.e. $0<x$) and $1/x$ is positive. 
On the other hand, if $x<0$, since the integrand is non-negative almost everywhere, the limits of integration are in the wrong order (i.e. $0>x$) and $1/x$ is negative, we can conclude that $\widehat{u}(x)\ge0$ for almost every $x$. 

Let now $\Lim{x\to\infty} u(x)=l$, we will prove that $\widehat{u}(x)$ converges to the same limit as $x\to\infty$.
As $u\in\li$, there exists $m>0$ sufficiently large for which $|u|< m$ almost everywhere.
As $\Lim{x\to \infty} u(x)=l$, for every $\eps >0$ there exists $a\in\R$ such that 
$$
l-\eps < u(x) < l+\eps \quad \text{for almost every } x>a. 
$$
Without loss of generality, we may restrict our attention to definitely large values of $x$  (in particular, assume $0<x=a+z$ for some positive $z$). 
Then we can write
\begin{equation}\nonumber
\begin{split}
\widehat{u}(x)=\frac{1}{x}\int_0^x u(y) \dif y = \frac{1}{a+z} \left( \int_0^a u(y)\dif y + \int_a^{a+z} u(y) \dif y \right) \\
\le \frac{1}{a+z}\left( \int_0^a m \dif y + \int_a^{a+z} (l+\eps) \dif y\right )= \frac{am}{a+z}+ \frac{z (l+\eps)}{a+z}\\   
= \frac{am}{x} + \frac{(x-a)(l+\eps)}{x}.
\end{split}
\end{equation}
In a completely analogous way the other inequality can be proven, so that, for $x$ sufficiently large we have
$$
-\frac{am}{x} + \frac{(x-a)(l-\eps)}{x}\le \widehat{u}(x)\le \frac{am}{x}+ \frac{(x-a)(l+\eps)}{x}.
$$
Thus,
$$
l-\eps \le \liminf_{x\to\infty} \widehat{u}(x)\le \limsup_{x\to \infty} \widehat{u}(x) \le l+\eps.
$$
By the arbitrariness of $\eps$ we conclude that $\Lim{x\to \infty} \widehat{u}(x)=l$.
\end{proof}

We are now ready to define the Banach limit associated to a right Lebesgue-ultrafilter.
\begin{defi}[Banach limits associated to an ultrafilter]\label{blimcostr}
Let $\U$ be a right Lebesgue-ultrafilter, then we can define the following functional $\mathcal{B}\Lim{\U}: \li\to\R$,
$$
\mathcal{B}\lim_\U u := \lim_\U \widehat{u} \quad \text{for all } u\in\li.
$$ 
\end{defi}
\begin{prop}\label{blimactually} The functional defined in Definition \ref{blimcostr} is actually a Banach limit in the sense of Definition \ref{banlim}.
\end{prop}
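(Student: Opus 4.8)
\section*{Proof proposal}

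The plan is to check, one by one, the three conditions in Definition \ref{banlim} (the fact that $\mathcal{B}\lim_\U$ is a well-defined \emph{linear} functional on $\li$ is automatic: $\widehat{\cdot}\colon\li\to\li$ is a well-defined linear operator by Remark \ref{cesind}, and $\Lim{\U}\colon\li\to\R$ is linear by the preceding proposition, so their composition is as well). Two of the three conditions are immediate consequences of Lemma \ref{cesprop}. For positivity: if $u\ge 0$ almost everywhere then $\widehat{u}\ge 0$ almost everywhere by Lemma \ref{cesprop}, hence $\mathcal{B}\lim_\U u=\Lim{\U}\widehat{u}\ge 0$ by positivity of $\Lim{\U}$. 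For the extension property: if $\Lim{x\to\infty}u(x)=l$ exists, then $\Lim{x\to\infty}\widehat{u}(x)=l$ by the second part of Lemma \ref{cesprop}, and since $\Lim{\U}$ extends the usual limit we get $\mathcal{B}\lim_\U u=\Lim{\U}\widehat{u}=l$.

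The only substantial point is translation-invariance, and this is exactly where the Ces\`aro-like averaging is needed. Fix $u\in\li$ and $t\in\R$, and choose $m>0$ with $|u|\le m$ almost everywhere. The strategy is to show that $\widehat{\mathcal{T}_t u}-\widehat{u}$ tends to $0$ as $x\to\infty$; granting this, the extension property of $\Lim{\U}$ gives $\Lim{\U}\bigl(\widehat{\mathcal{T}_t u}-\widehat{u}\bigr)=0$, and linearity of $\Lim{\U}$ then yields $\mathcal{B}\lim_\U(\mathcal{T}_t u)=\Lim{\U}\widehat{\mathcal{T}_t u}=\Lim{\U}\widehat{u}=\mathcal{B}\lim_\U u$. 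To obtain the decay, for $x>0$ substitute $z=y-t$:
$$
\widehat{\mathcal{T}_t u}(x)=\frac1x\int_0^x u(y-t)\dif y=\frac1x\int_{-t}^{x-t}u(z)\dif z,
$$
so that, splitting the two intervals of integration at $0$ and at $x-t$,
$$
\widehat{\mathcal{T}_t u}(x)-\widehat{u}(x)=\frac1x\left(\int_{-t}^{0}u(z)\dif z-\int_{x-t}^{x}u(z)\dif z\right).
$$
Each of the two integrals on the right-hand side is taken over an interval of length $|t|$ with integrand bounded by $m$ in absolute value, so the whole expression is bounded by $2m|t|/x$, which goes to $0$ as $x\to\infty$. (Only the behaviour at $+\infty$ matters, so the case $x<0$ need not be addressed.)

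I expect the computation in the second paragraph to be the only real obstacle: the point is to recognize that a rigid translation of the argument becomes, after Ces\`aro averaging, an $O(1/x)$ perturbation near infinity, and is therefore invisible to $\Lim{\U}$ (which, being an extension of the usual limit, annihilates every function vanishing at $+\infty$). Once this is in place, positivity, the extension of the limit, and linearity are routine bookkeeping resting on Lemma \ref{cesprop} and on the properties of $\Lim{\U}$ established earlier.
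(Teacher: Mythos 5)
Your proposal is correct and follows essentially the same route as the paper: positivity and the extension property are read off from Lemma \ref{cesprop}, linearity is the composition of linear maps, and translation-invariance is reduced to the $O(1/x)$ estimate $\abs{\widehat{\mathcal{T}_t u}(x)-\widehat{u}(x)}\le 2m\abs{t}/x$ obtained by shifting the interval of integration. Your change of variables is in fact written more carefully than the paper's (which has a harmless sign slip, integrating $u(y+t)$ where $u(y-t)$ is meant), but the argument is the same.
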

\begin{proof}
Fix a right Lebesgue-ultrafilter $\U$. 

First of all the functional $\blim$ is well defined, because, as stated in Remark \ref{cesind}, for every $u\in\li$, $\widehat{u}\in \li$ as well. 

Linearity is also obvious as $\blim$ is the composition of two linear operators.

Moreover, if $u\ge 0$ almost everywhere, then also $\widehat{u}\ge 0 $ almost everywhere by the first claim of Lemma \ref{cesprop} and therefore $\blim u = \Lim{\U} \widehat{u}\ge 0$.

Let now $\Lim{x\to\infty} u(x)=l$, the second claim of Lemma \ref{cesprop} implies that also $\Lim{x\to\infty}\widehat{u}(x)=l$ and finally, as ultralimits extend the usual limit operator, we conclude that $\blim u=\Lim{\U} z\cdot{u}=l$.

In order to prove translation-invariance, we will fix $u\in\li$ and $t\in\R$ and show that $\Lim{x\to\infty} \huge( \widehat{u}(x)-\widehat{\mathcal{T}_t u}(x)\huge)=0$. The claim $\blim u = \blim \mathcal{T}_t u$ follows by linearity and the extension property of $\blim$ that have just been proven.  
Consider $x>0$, then 
\begin{equation}\nonumber
\begin{split}
\widehat{u}(x)-\widehat{\mathcal{T}_t u}(x)= \frac{1}{x}\int_0^x u(y)\dif y - \frac{1}{x}\int_0^x u(y+t)\dif y 
=\frac{1}{x}\int_0^x u(y)\dif y -\frac{1}{x}\int_t^{x+t} u(y)\dif y \\
= \frac{1}{x}\left( \cancel{\int_0^x u(y)\dif y} -\int_t^0 u(y)\dif y - \cancel{\int_0^x u(y)\dif y} -\int_x^{x+t} u(y)\dif y   \right).
\end{split}
\end{equation}
Since $u\in\li$, we can find a positive constant $m$ such that $|u|\le m$ almost everywhere.
We deduce that 
$$
|\widehat{u}(x)-\widehat{\mathcal{T}_t u}(x)|\le \frac{2|t| m}{x};
$$
hence, letting $x\to\infty$, we get $\Lim{x\to\infty } \widehat{u}(x)-\widehat{\mathcal{T}_t u}(x)=0$.
\end{proof} 

We have finally proven the existence of Banach limits modulo the existence of right Lebesgue-ultrafilters.
In the following section we will show a construction of such ultrafilters that employs Zorn's lemma. 

\section{On the existence of right Lebesgue-ultrafilters}\label{existuf}

\begin{defi}[FIP]
A family $\mathcal{S}$ of subsets of a set $X$ is said to have the \em{finite intersection property} (\em{FIP} for short) if the intersection over any finite subcollection of $\mathcal{S}$ is not empty, in other words
$$
\forall n\in\N \; \forall D_1,\dots,D_n\in\mathcal{S} \quad \text{ we have } \bigcap_{i=1}^n D_n \ne \emptyset.
$$ 
\end{defi}

\begin{example}\label{NR}
The sets 
$$
\mathcal{N}:=\sett{B\sub \R}{B^\complement \text{ is a Lebesgue-null set }}, \quad
\mathcal{R}:=\sett{(a,\infty)}{a\in\R}
$$
are closed under finite intersections and do not contain the empty set as one of their elements; in particular this implies that they have the FIP.
\end{example}

\begin{lem}Let $\mathcal{S}$ be a non-empty family of subsets of $\R$ with the FIP, then the set 
$$
\langle \mathcal{S} \rangle := \{A\sub \R \,\mid\, \exists n\in\N : \, \exists D_1,\dots,D_n\in\mathcal{S}:\, \bigcap_{i=1}^n D_i \sub A \}
$$
is a filter (it is called the \emph{filter generated by $\mathcal{S}$}).
\end{lem}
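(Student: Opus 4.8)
The plan is to verify directly that $\langle \mathcal{S}\rangle$ satisfies the four filter axioms \textbf{F1}--\textbf{F4}, using the finite intersection property (FIP) of $\mathcal{S}$ at exactly the one point where emptiness could otherwise intrude. Throughout, recall that $\mathcal{S}\neq\emptyset$, so there is at least one $D_1\in\mathcal{S}$, and hence $\langle\mathcal{S}\rangle$ is non-empty (it contains $D_1$ itself, taking $n=1$ and $A=D_1$).

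First I would check \textbf{F1}: picking any single $D_1\in\mathcal{S}$, we have $D_1\sub\R$, so $\R\in\langle\mathcal{S}\rangle$ by definition. Next, \textbf{F2}: suppose for contradiction that $\emptyset\in\langle\mathcal{S}\rangle$; then there are $D_1,\dots,D_n\in\mathcal{S}$ with $\bigcap_{i=1}^n D_i\sub\emptyset$, i.e. $\bigcap_{i=1}^n D_i=\emptyset$, directly contradicting the FIP of $\mathcal{S}$. This is the only place the FIP is used, and it is the crux of the argument — the rest is bookkeeping. For \textbf{F4}: if $A\in\langle\mathcal{S}\rangle$, witnessed by $D_1,\dots,D_n\in\mathcal{S}$ with $\bigcap_{i=1}^n D_i\sub A$, and $A\sub B$, then $\bigcap_{i=1}^n D_i\sub B$ as well, so $B\in\langle\mathcal{S}\rangle$ with the same witnesses.

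The remaining axiom \textbf{F3} (closure under finite, equivalently pairwise, intersection) is where a tiny bit of care is needed: given $A,B\in\langle\mathcal{S}\rangle$, pick witnesses $D_1,\dots,D_n\in\mathcal{S}$ with $\bigcap_{i=1}^n D_i\sub A$ and $E_1,\dots,E_m\in\mathcal{S}$ with $\bigcap_{j=1}^m E_j\sub B$. Then the combined finite family $D_1,\dots,D_n,E_1,\dots,E_m$ lies in $\mathcal{S}$ and satisfies $\bigcap_{i=1}^n D_i\cap\bigcap_{j=1}^m E_j\sub A\cap B$, so $A\cap B\in\langle\mathcal{S}\rangle$. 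I do not anticipate any genuine obstacle here; the only subtlety worth flagging is ensuring non-emptiness of $\langle\mathcal{S}\rangle$ (handled by $\mathcal{S}\neq\emptyset$) and making sure that in \textbf{F3} one takes the \emph{union} of the two witnessing families rather than trying to reuse a single family. One could also remark afterwards, as a sanity check, that $\mathcal{S}\sub\langle\mathcal{S}\rangle$ and that $\langle\mathcal{S}\rangle$ is the smallest filter containing $\mathcal{S}$, although neither is needed for the statement as given.
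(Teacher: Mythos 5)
Your proposal is correct and follows essentially the same route as the paper's proof: verify \textbf{F1}--\textbf{F4} directly, invoking the FIP only to rule out $\emptyset\in\langle\mathcal{S}\rangle$, and combining the two witnessing families to handle closure under intersection. No gaps.
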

\begin{proof}

It is clear that $\R\in\langle \mathcal{S}\rangle$. 
Moreover, suppose that $\emptyset\in\langle \mathcal{S}\rangle$, we would have some $D_1,\dots, D_n\in\mathcal{S}$ with
$$
\bigcap_{i=1}^n D_i\sub \emptyset \implies \bigcap_{i=1}^n D_i = \emptyset,
$$
which contradicts the FIP of $\s$.

Let $A,B\in\langle \s\rangle$, then $\bigcap_{i=1}^n D_i\sub A$ and $\bigcap_{j=1}^m F_j\sub B$ for some $D_1,\dots, D_n,F_1,\dots, F_m\in\s$.
This implies 
$$
\left( \bigcap_{i=1}^n D_i \right) \cap \left(\bigcap_{j=1}^m F_j \right)\sub A\cap B,
$$
hence $A\cap B\in\langle \s\rangle$.

Finally let $A\in\langle \s\rangle$ (which means $A\supseteq \bigcap _{i=1}^n D_i$ for some suitable $D_i$'s in $\s$) and consider $A\sub B\sub \R$. It is immediate to prove that also $B\in\langle \s\rangle$ as 
$$
\bigcap_{i=1}^n D_i\sub A\sub B.
$$ 
\end{proof}

In order to finally prove the existence of right Lebesgue-ultrafilters, we will need the following version of the axiom of choice, known as Zorn's lemma (we refer to \cite{naiveset} for more details).

\begin{lem}[Zorn's lemma]
If $\mathscr{A}$ is a partially ordered set such that every chain $\mathscr{B}\sub \mathscr{A}$ has an upper bound in $\mathscr{A}$, then $\mathscr{A}$ contains at least one maximal element. 
\end{lem}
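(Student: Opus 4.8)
The plan is to derive Zorn's lemma from the Axiom of Choice (to which, in fact, it is equivalent) by a transfinite recursion argument. The governing idea is classical: if $\mathscr{A}$ possessed no maximal element, then one could manufacture a strictly increasing transfinite sequence inside $\mathscr{A}$ indexed by arbitrarily long ordinals, which is absurd because $\mathscr{A}$ is a set and hence cannot accommodate injectively the ordinals below its Hartogs number.

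First I would fix, using the Axiom of Choice, a choice function $f$ selecting an element $f(S)\in S$ from every nonempty $S\subseteq\mathscr{A}$. Arguing by contradiction, assume $\mathscr{A}$ has no maximal element. I would then define a sequence $(a_\alpha)$ by recursion on the ordinals as follows: suppose $a_\beta$ has been defined for all $\beta<\alpha$, with $a_\beta<a_\gamma$ whenever $\beta<\gamma<\alpha$. Then $\{a_\beta:\beta<\alpha\}$ is a chain in $\mathscr{A}$ (note that for $\alpha=0$ this is the empty chain, which is still a chain), so by hypothesis it admits an upper bound $b\in\mathscr{A}$; since $b$ is not maximal, the set $S_\alpha:=\{c\in\mathscr{A}:c>b\}$ is nonempty, and $c\in S_\alpha$ satisfies $c>a_\beta$ for every $\beta<\alpha$. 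Put $a_\alpha:=f(S_\alpha)$. This keeps the sequence strictly increasing, so in particular $\alpha\mapsto a_\alpha$ is injective.

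To extract the contradiction within ordinary set-theoretic bookkeeping, I would invoke Hartogs' theorem: let $\kappa$ be the least ordinal that does not inject into $\mathscr{A}$; such a $\kappa$ exists and is a genuine set. Carrying out the recursion above up to $\kappa$ produces an injection $\kappa\to\mathscr{A}$, contradicting the choice of $\kappa$. Hence the assumption fails and $\mathscr{A}$ has a maximal element. (Alternatively one may phrase the contradiction as ``the proper class of all ordinals injects into the set $\mathscr{A}$'', appealing to Burali-Forti, but the Hartogs-number formulation avoids talk of proper classes.)

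The main obstacle is purely one of rigour rather than ideas: one must justify that the transfinite recursion is legitimate, i.e.\ that the recipe above genuinely defines a function on $\kappa$ by the Transfinite Recursion Theorem, which is why the choice function $f$ is fixed once and for all at the outset so that each step is \emph{canonical}. One must also verify at limit stages that the already-constructed initial segment is indeed a chain — this is immediate from the strict increase maintained at every prior stage. Beyond that the argument is routine.
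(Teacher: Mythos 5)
The paper does not actually prove this statement: Zorn's lemma is invoked as a known equivalent of the axiom of choice, with a pointer to Halmos's \emph{Naive Set Theory}, and is then \emph{used} to build the maximal filter. So there is no in-paper argument to compare against; what you have supplied is the standard derivation of Zorn's lemma from the Axiom of Choice via transfinite recursion and Hartogs' theorem, and it is essentially correct. One point deserves tightening: for the Transfinite Recursion Theorem to apply, the passage from the initial segment $\{a_\beta:\beta<\alpha\}$ to $a_\alpha$ must be a genuine function of that segment, but as written your step first picks ``an upper bound $b$'' non-canonically and only then applies $f$ to $S_\alpha=\{c:c>b\}$, which depends on the arbitrary $b$. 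Either apply $f$ twice (once to the nonempty set of upper bounds of the segment to get $b$, once to $\{c:c>b\}$), or, more cleanly, define $S_\alpha:=\{c\in\mathscr{A}: c>a_\beta \text{ for all } \beta<\alpha\}$ directly, prove it nonempty by the upper-bound-plus-non-maximality argument, and set $a_\alpha:=f(S_\alpha)$. With that repair the recursion is legitimate, the map $\kappa\to\mathscr{A}$ is injective, and the Hartogs contradiction goes through. Note also that your use of the empty chain to get $\mathscr{A}\neq\emptyset$ matches the paper's own convention, since the paper explicitly checks that the empty chain has an upper bound when it applies the lemma.
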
 

Before proceeding with our work, let us recall some basic definitions.

\begin{defi}[Partially ordered set]
Let $\mathscr{A}$ be a set; a relation $\le$ on $\mathscr{A}$ is said to be a partial order if it is {\em reflexive}, {\em antisymmetric} and {\em transitive}. Moreover $\le$ is called a {\em total order} if $\forall \mathcal{A},\mathcal{B}\in\mathscr{A}$ either $\mathcal{A}\le \mathcal{B}$ or $\mathcal{B}\le \mathcal{A}$ holds true.
\end{defi} 

\begin{defi}[Chain]
 Let $\mathscr{A}$ be a set partially ordered by $\le$, a subset $\mathscr{B}\sub \mathscr{A}$ is said to be a chain if $\le$ induces a total order relation on $\mathscr{B}$.
\end{defi}

\begin{defi}[Upper bound]
Let $\mathscr{A}$ be a set partially ordered by $\le$ and let $\mathscr{B}\sub \mathscr{A}$. 
We say that $\mathscr{B}$ has an {\em upper bound} in $\mathscr{A}$ if there exists $\mathcal{A}\in\mathscr{A}$  such that $\mathcal{B}\le\mathcal{A}$ for all $\mathcal{B}\in\mathscr{B}$.
\end{defi}

\begin{defi}[Maximal element]
Let $\mathscr{A}$ be a partially ordered set with order relation $\le$. An element $\mathcal{M}\in\mathscr{A}$ is a {\em maximal element} if for all $\mathcal{L}\in\mathscr{A}$ with $\mathcal{M}\le \mathcal{L}$ we have that necessarily $\mathcal{M}=\mathcal{L}$.
\end{defi}

\begin{rmk}
If $\mathscr{A}$ is a family of sets, then $\mathscr{A}$ is partially ordered by the set inclusion $\sub$.
\end{rmk}

\begin{thm}[Existence of right Lebesgue-ultrafilters]
There exists at least one right Lebesgue-ultrafilter on $\R$.
\end{thm}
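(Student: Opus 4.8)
The plan is to construct the desired ultrafilter by a standard Zorn's lemma argument applied to the poset of filters containing the ``seed'' families $\mathcal{N}$ and $\mathcal{R}$ from Example \ref{NR}, ordered by inclusion. First I would observe that $\mathcal{N}\cup\mathcal{R}$ has the FIP: indeed, a finite intersection of sets from $\mathcal{N}$ is the complement of a finite union of null sets, hence the complement of a null set; a finite intersection of sets from $\mathcal{R}$ is again a set of the form $(a,\infty)$; and the intersection of a set whose complement is null with a set $(a,\infty)$ is non-empty since $(a,\infty)$ is not a null set. Hence by the preceding lemma the filter $\F_0:=\langle \mathcal{N}\cup\mathcal{R}\rangle$ is well defined, and by construction every $(a,\infty)$ lies in $\F_0$ and the complement of every null set lies in $\F_0$, so any filter extending $\F_0$ is automatically a right Lebesgue-ultrafilter once we check it is an ultrafilter.

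Next I would set $\mathscr{A}$ to be the collection of all filters on $\R$ that contain $\F_0$, partially ordered by set inclusion, and verify the hypothesis of Zorn's lemma: given a chain $\mathscr{B}\sub\mathscr{A}$, its union $\bigcup\mathscr{B}$ is again a filter containing $\F_0$. The axioms {\bf F1}, {\bf F2}, {\bf F4} pass to the union immediately; for {\bf F3}, given $A,B\in\bigcup\mathscr{B}$ one finds filters $\F_A,\F_B\in\mathscr{B}$ with $A\in\F_A$, $B\in\F_B$, and since $\mathscr{B}$ is a chain one of them, say $\F_A\sub\F_B$, contains both, so $A\cap B\in\F_B\sub\bigcup\mathscr{B}$. (If $\mathscr{B}=\emptyset$, then $\F_0$ itself is an upper bound.) Thus every chain has an upper bound in $\mathscr{A}$, and Zorn's lemma provides a maximal element $\U\in\mathscr{A}$.

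It remains to show that a maximal element $\U$ of $\mathscr{A}$ is an ultrafilter, i.e. for every $A\sub\R$ either $A\in\U$ or $A^\complement\in\U$. Suppose $A\notin\U$; I would show $A^\complement\in\U$ by verifying that $\U\cup\{A^\complement\}$ still generates a filter in $\mathscr{A}$, which by maximality forces $A^\complement\in\U$. For this it suffices to check that $\{A^\complement\}\cup\U$ has the FIP, which (since $\U$ is closed under finite intersections) reduces to showing $A^\complement\cap F\ne\emptyset$ for every $F\in\U$. If some $F\in\U$ had $A^\complement\cap F=\emptyset$, then $F\sub A$, whence $A\in\U$ by {\bf F4}, contradicting $A\notin\U$. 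So $\langle\U\cup\{A^\complement\}\rangle$ is a filter containing $\F_0$ and containing $\U$, hence equals $\U$ by maximality, giving $A^\complement\in\U$. Therefore $\U$ is an ultrafilter, and since $\U\supseteq\F_0$ it is a right Lebesgue-ultrafilter, completing the proof.

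The main obstacle — really the only place any care is needed — is the FIP check for $\mathcal{N}\cup\mathcal{R}$, i.e. the interaction between the two seed families: one must use that a set of positive (indeed infinite) measure cannot be contained in a null set, which is exactly what guarantees $(a,\infty)\cap N^\complement\ne\emptyset$ for null $N$. Everything else (closure of chain-unions under the filter axioms, and the maximality-implies-ultrafilter step) is the routine template of the Boolean prime ideal theorem specialized to this setting.
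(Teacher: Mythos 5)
Your proposal is correct and follows essentially the same route as the paper: verify the FIP of $\mathcal{N}\cup\mathcal{R}$ by reducing to a single null-complement intersected with a single ray, generate a filter, apply Zorn's lemma to the inclusion-ordered family of filters containing it with chain-unions as upper bounds, and show maximality forces the ultrafilter property via the FIP of $\U\cup\{A^\complement\}$. The only cosmetic difference is that you derive $A^\complement\in\U$ directly from $A\notin\U$, whereas the paper phrases the last step as a contradiction from assuming neither $A$ nor $A^\complement$ lies in $\U$; the substance is identical.
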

\begin{proof}
The first step consists in proving the existence of at least one maximal filter containing $\mathcal{N}$ and $\mathcal{R}$ (see example \ref{NR}).
In the final step we will show that such a filter is indeed a right Lebegue-ultrafilter. 

Consider the following set
$$
\mathscr{A}:=\sett{\mathcal{F}}{\mathcal{F} \text{ is a filter on }\R \text{ and } \mathcal{N}\cup\mathcal{R}\sub \mathcal{F}}.
$$
We are going to apply Zorn's lemma to $\mathscr{A}$. 
We claim that $\mathscr{A}$ is non-empty (hence the empty chain has an upper bound in $\mathscr{A}$). In order to prove it, we will show that the set $\mathcal{N}\cup \mathcal{R}$ has the FIP. Take some $N_1,\dots, N_t\in\mathcal{N}$ and $R_1,\dots,R_s\in\mathcal{R}$; as both $\mathcal{N}$ and $\mathcal{R}$ are closed under finite intersections, as pointed out in Example \ref{NR}, without loss of generality we can restrict our attention to the case $t=s=1$.
Indeed, take $N\in\mathcal{N}$ and $(a,\infty)\in\mathcal{R}$ and suppose that $N\cap (a,\infty)=\emptyset$; in other words we are assuming $N\sub (-\infty,a]$, thus $N^\complement \supseteq (a,\infty)$, which is a contradiction, as $N^\complement$ is a null-set. As $\mathcal{N}\cup \mathcal{R}$ has the FIP, the filter $\langle\mathcal{N}\cup \mathcal{R} \rangle $ is well defined and thus $\mathscr{A}\ne\emptyset$.

Consider now an arbitrary non-empty chain $\mathscr{B}\sub \mathscr{A}$. We will prove that 
$$
\mathcal{G}:=\bigcup_{\mathcal{B}\in\mathscr{B}}\mathcal{B}\in \mathscr{A},
$$
so that in particular we will have shown that $\mathscr{B}$ has an upper bound in $\mathscr{A}$.
As $\mathcal{N}\cup \mathcal{R}\sub \mathcal{B}$ for all $\mathcal{B}\in\mathscr{B}$, we just need to prove that $\mathcal{G}$ is a filter. 
As $\R\in\mathcal{B}$ and $\emptyset\notin\mathcal{B}$ for all $\mathcal{B}\in\mathscr{B}$ we have that $\R\in\mathcal{G}$ and $\emptyset\notin \mathcal{G}$
Take now two arbitrary $A,B\in\mathcal{G}$, we have that $A\in\mathcal{B}_1$ and $B\in\mathcal{B}_2$ for some $\mathcal{B}_1,\mathcal{B}_2\in\mathscr{B}$; as $\mathscr{B}$ is a chain, without loss of generality we may assume that $\mathcal{B}_1\sub \mathcal{B}_2$, hence $A,B\in\mathcal{B}_2$. Since $\mathcal{B}_2$ is a filter, $A\cap B \in\mathcal{B}_2\sub\mathcal{G}$.
Let now $A\in\mathcal{G}$ and let $B\sub\R$ such that $A\sub B$; as $A\in\mathcal{G}$ we have that $A\in\mathcal{B}$ for some suitable $\mathcal{B}\in\mathscr{B}$, hence, as $\mathcal{B}$ is a filter, also $B\in \mathcal{B}$, hence $B\in\mathcal{G}$.
We have just proven that every chain $\mathscr{B}\sub \mathscr{A}$ (be it either empty or non-epty) has an upper bound in $\mathscr{A}$. Therefore Zorn's lemma ensures us the existence of a maximal element $\U\in\mathscr{A}$. We are now going to prove that $\U$ is an ultrafilter.

Take an arbitrary $A\sub\R$. If we show that either $A$ or $A^\complement$ belongs to $\U$ we can conclude that $\U$ is an ultrafilter.
Assume, towards a contradiction, that neither $A$ nor $A^\complement$ is an element of $\U$. Then we claim that $\U\cup \{A^\complement\}$ has the FIP. This can be proven by contradiction: suppose, without loss of generality, that here exists $U\in\U$
such that $U\cap A^\complement =\emptyset$. This means that $U\sub A$, thus $A\in\U$, leading to a contradiction. As $\U\cup \{A^\complement\}$ has the FIP, we can deduce that $\U\sub \langle \U\cup \{A^\complement\}\rangle$, hence, by maximality,
$\U=\langle \U\cup \{A^\complement\}\rangle\ni A^\complement$, which is against our previous assumptions.
We have therefore proven that $\U$ is an ultrafilter containing $\mathcal{N}$ and $\mathcal{R}$, i.e. it is a right Lebesgue-ultrafilter.
\end{proof} 

\section{Contruction of valuations on $\li$ through Banach limits}
With all this machinery at disposal it is fairly easy to provide some non-trivial examples of translation-invariant valuations on $\li$.
\begin{prop}\label{koko}
Let $\U$ be a right (or left) Lebesgue-ultrafilter, then the functional 
$$
\mathcal{B}\lim_\U : \li\to\R
$$
defined in Definition \ref{blimcostr} is a non-trivial, monotone, continuous, translation-invariant valuation.
\end{prop}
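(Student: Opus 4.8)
The plan is to observe that almost all of the required properties have already been established elsewhere in the paper, so that the proof is a short bookkeeping exercise. Indeed, Proposition~\ref{blimactually} shows that $\blim$ is a Banach limit in the sense of Definition~\ref{banlim}, hence it is linear, positive, translation-invariant, and extends the limit operator; and the proposition asserting that Banach limits are continuous shows that $\blim$ is continuous with respect to the $\li$-metric. Consequently the only statements left to verify are that $\blim$ is genuinely a valuation, that it is monotone, and that it is non-trivial, and each of these will be deduced in one line from linearity and positivity.

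For the valuation identity I would start from the elementary pointwise fact that $a\lor b + a\land b = a+b$ for all $a,b\in\R$. Since $\li$ is a lattice, $u\lor v$ and $u\land v$ again belong to $\li$ for every $u,v\in\li$, so applying the identity pointwise (almost everywhere) gives $(u\lor v)+(u\land v)\equiv u+v$ in $\li$. As $\blim$ is linear (and therefore respects $\equiv$ and is additive), this yields
\[
\blim(u\lor v)+\blim(u\land v)=\blim\bigl((u\lor v)+(u\land v)\bigr)=\blim(u+v)=\blim u+\blim v,
\]
which is precisely the valuation property. Moreover $\blim\0=0$, since the constant function $\0$ has limit $0$ at $+\infty$ and $\blim$ extends the limit operator (equivalently, $\blim\0=\blim(0\cdot\0)=0$ by linearity); thus $\blim$ is a valuation in the normalized sense used throughout the paper.

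Monotonicity follows at once from positivity and linearity: if $u\le v$ almost everywhere, then $v-u\ge 0$ almost everywhere, so $\blim(v-u)\ge 0$, and linearity gives $\blim v-\blim u\ge 0$, i.e. $\blim u\le\blim v$. Finally, for non-triviality it is enough to exhibit one function on which $\blim$ does not vanish: the constant function ${\bf 1}$ satisfies $\Lim{x\to\infty}{\bf 1}(x)=1$, so $\blim{\bf 1}=1\neq 0$. The left-ultrafilter case is identical, with $\Lim{x\to-\infty}$ replacing $\Lim{x\to\infty}$. There is essentially no obstacle in this argument; the only points worth stating with care are that the lattice operations never leave $\li$ (so that the valuation equation is meaningful for every pair $u,v\in\li$) and that the normalization $\blim\0=0$ does hold, so that $\blim$ qualifies as a valuation under the paper's conventions.
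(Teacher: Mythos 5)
Your proof is correct and follows essentially the same route as the paper: the valuation identity from the pointwise fact $u\lor v+u\land v=u+v$ together with linearity, the normalization $\blim\0=0$ from the extension property, and non-triviality via $\blim{\bf 1}=1$. You are in fact slightly more careful than the paper in spelling out that monotonicity comes from positivity plus linearity and that continuity comes from the earlier proposition on Banach limits, which the paper leaves implicit when it cites Proposition~\ref{blimactually}.
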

\begin{proof}
Let $u,v\in\li$. Now, as for all $x\in\R$, $\{u(x),v(x)\}=\{u(x)\lor v(x), u(x)\land v(x)\}$, 
we can write 
$$
u+v = u\lor v+u\land v.
$$
Therefore, by linearity:
$$
\mathcal{B}\lim_\U u + \mathcal{B}\lim_\U v = \mathcal{B}\lim_\U \left(u\lor v\right) + \mathcal{B}\lim_\U \left(u\land v\right).
$$
Moreover, as Banach limits extend the usual limit, $\blim \0=0$, in other words the functional $\blim$ is a valuation. Proposition \ref{blimactually} tells us that $\blim$ is monotone, continuous and translation-invariant; it is moreover non-trivial because $\blim {\bf 1}=1\ne 0$.
\end{proof}

The following lemma will help us extend the result of Proposition \ref{koko}.

\begin{lem}\label{muzukasii}
Let $f:\R\to\R$ be monotone non-decreasing (respectively, continuous), then the map
$$
\fullfunction{\bar{f}}{\li}{\li}{u}{f\circ u}
$$
is well defined and monotone non-decreasing (respectively, contiunuous in the $\li$-metric).
\end{lem}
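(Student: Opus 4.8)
The plan is to check the three requirements in turn: that $\bar f$ is a well-defined map $\li\to\li$ (in particular that it respects the a.e.\ identification), that it is monotone non-decreasing when $f$ is, and that it is continuous in the $\li$-metric when $f$ is continuous.

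First I would settle well-definedness. Given $u\in\li$, fix $m>0$ with $|u|\le m$ almost everywhere, so that $u$ takes values a.e.\ in $[-m,m]$. If $f$ is non-decreasing, then $f\circ u$ takes values a.e.\ in $[f(-m),f(m)]$; if $f$ is continuous, then $f\circ u$ takes values a.e.\ in the compact set $f([-m,m])$. Either way $f\circ u$ is essentially bounded. For measurability, observe that both a monotone function and a continuous function on $\R$ are Borel measurable (for a monotone $f$, the preimage of a ray is an interval), so $f\circ u$ is the composition of a Borel map with a Lebesgue-measurable map, hence Lebesgue measurable; thus $f\circ u\in\li$. Finally, since $\{f\circ u\ne f\circ v\}\sub\{u\ne v\}$, the assignment $u\mapsto f\circ u$ descends to the identified space, so $\bar f\colon\li\to\li$ is well defined.

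Monotonicity is then immediate: if $u\le v$ almost everywhere and $f$ is non-decreasing, then $f(u(x))\le f(v(x))$ for almost every $x$, i.e.\ $\bar f(u)\le\bar f(v)$.

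The only nontrivial point — the step I expect to be the main obstacle — is continuity. The key idea is a \emph{uniform truncation}: if $u_n\to u$ in $\li$ and $M:=\norm{u}_\infty$, then for all $n$ large enough we have $\norm{u_n}_\infty\le M+1$, so all the functions involved take values almost everywhere in the fixed compact interval $K:=[-(M+1),M+1]$. Since $f$ is (uniformly) continuous on the compact set $K$, given $\eps>0$ there is $\delta>0$ such that $|f(s)-f(t)|<\eps$ whenever $s,t\in K$ and $|s-t|<\delta$. Choosing $n$ large enough that also $\norm{u_n-u}_\infty<\delta$, for almost every $x$ all of these constraints hold simultaneously, whence $|f(u_n(x))-f(u(x))|<\eps$; therefore $\norm{\bar f(u_n)-\bar f(u)}_\infty\le\eps$, which gives $\bar f(u_n)\to\bar f(u)$. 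It is worth noting that this is precisely why pointwise continuity of $f$ suffices and uniform continuity on all of $\R$ is not needed: $\li$-convergence automatically confines everything to a compact interval, on which $f$ is uniformly continuous. Beyond this observation the remaining work is routine bookkeeping with almost-everywhere statements.
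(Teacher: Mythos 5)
Your proof is correct and follows essentially the same route as the paper's: Borel measurability of monotone/continuous $f$ plus boundedness on $[-m,m]$ for well-definedness, and the uniform-continuity-on-a-compact-interval argument for continuity. The only addition is your explicit check that $\bar f$ respects the almost-everywhere identification, which the paper leaves implicit.
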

\begin{proof}
Let $u\in\li$. It is simple to check that also $f\circ u$ belongs to $\li$ (i.e. the application is well defined); indeed $f$ is Borel measurable whenever it is monotone or continuous and hence so is $f\circ u$. 
To show that $f\circ u$ is bounded almost everywhere, recall that there exists some $m>0$ such that $-m\le u(x)\le m$ for almost every $x\in\R$.
If $f$ is monotone non decreasing, then 
$$
\inf_{[-m,m]}f \le f(u(x))\le \sup_{[-m,m]}f \quad \text{for almost every } x\in\R,
$$ 
where the extreme values above are actually attained in case $f$ is either monotone or continuous. Thus $f\circ u \in\li$ as claimed.

If $f$ is monotone, then $\bar{f}$ inherits the same property.
How continuity of $f$ translates into that of $\bar{f}$ is more delicate.
Suppose now that $f$ is continuous and take an arbitrary sequence $\del{u_n}_{n\in\N}$ of functions in $\li$ converging to $u\in\li$ in the $\li$-metric.\\
As $\norm{u_n}_\infty\to\norm{u}_\infty$, we deduce that the sequence $\del{u_n}_{n\in\N}$ is bounded, in particular there exists a real number $m$ such that
$$
-{\bf m}\le u_n \le {\bf m}, \quad -{\bf m}\le u \le {\bf m} \quad \text{for all $n\in\N$, almost everywhere in $\R$.}
$$
Moreover, since $f$ is continuous in $\R$, it is uniformly continuous in $\intcc{-m,m}$, i.e. 
\begin{equation}\label{unicont}
\forall \eps>0 \; \exists \delta>0 : \; \forall t,s\in [-m,m]; \quad \abs{t-s}<\delta \implies \abs{f(t)-f(s)}<\eps.
\end{equation}
We assumed that $\norm{u_n-u}_\infty\to 0$, in other words:
\begin{equation}\label{conv}
\forall \eta >0 \;\exists n_0 :\quad  \N\ni n>n_0 \implies \norm{u_n-u}_\infty < \eta. 
\end{equation}
Fix now $\eps>0$: we can find a suitable $\delta>0$ such that \eqref{unicont} is satisfied; moreover, if we set $\eta:=\delta$, using \eqref{conv} we find a natural number $n_0$ such that, $\forall n>n_0$ we have
$$
\abs{u_n(x)-u(x)}\le \norm{u_n-u}_\infty < \delta\quad \text{for almost every $x\in\R$.} 
$$     
 Hence (by \eqref{unicont}) for all $n>n_0$ the following holds:
$$
\abs{f(u_n(x))-f(u(x))}<\eps \quad \text{for almost every $x\in\R$}.
$$
Passing to the essential supremum yields $\norm{f(u_n)-f(u)}_\infty \le \eps$ definitely in $n$. In other words $f(u_n)\to f(u)$ in the $\li$-norm as claimed. 
\end{proof}

\begin{prop}\label{generalex}
Let $f:\R\to\R$ be either monotone non-decreasing or continuous and such that $f(0)=0$ and let $\U$ be a right (or left) Lebesgue-ultrafilter; then $\mu(\cdot):=\blim f(\cdot)$ is a translation-invariant valuation on $\li$.
Moreover, if $f$ is monotone (respesctively continuous), then also $\mu$ is monotone (respectively, continuous).
\end{prop}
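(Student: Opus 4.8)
The strategy is to recognise that $\mu = \blim \circ \bar f$, where $\bar f\colon u\mapsto f\circ u$ is the operator studied in Lemma~\ref{muzukasii}, and then to pull the relevant properties of $\blim$ (collected in Propositions~\ref{koko} and~\ref{blimactually}) back through $\bar f$. First I would record that $\mu$ is well defined: since $f$ is monotone non-decreasing or continuous, Lemma~\ref{muzukasii} gives $f\circ u\in\li$ for every $u\in\li$, so $\blim (f\circ u)$ makes sense. Moreover $f\circ\0$ is the constant function with value $f(0)=0$, i.e.\ $f\circ\0\equiv\0$, hence $\mu(\0)=\blim\0=0$, which is the normalisation we require of valuations on $\li$.

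For the valuation identity the only genuinely new observation is the following: for every $x\in\R$ the unordered pair $\{u(x),v(x)\}$ equals the unordered pair $\{(u\lor v)(x),(u\land v)(x)\}$, so, applying $f$ to both entries,
$$
f(u(x))+f(v(x)) = f\big((u\lor v)(x)\big) + f\big((u\land v)(x)\big)
$$
no matter what $f$ is. In particular one must \emph{not} invoke $f(a\lor b)=f(a)\lor f(b)$, which fails for general continuous $f$; the multiset identity above is the same device already used in the proof of Proposition~\ref{koko}. Thus $f\circ u + f\circ v = f\circ(u\lor v) + f\circ(u\land v)$ as elements of $\li$, and applying the linear functional $\blim$ yields $\mu(u)+\mu(v) = \mu(u\lor v)+\mu(u\land v)$. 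Together with $\mu(\0)=0$ this shows $\mu$ is a valuation.

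Translation-invariance is immediate: since $f\circ(\mathcal{T}_t u) = \mathcal{T}_t(f\circ u)$ for every $t\in\R$, we get $\mu(\mathcal{T}_t u) = \blim \mathcal{T}_t(f\circ u) = \blim(f\circ u) = \mu(u)$ by the translation-invariance of $\blim$. If $f$ is monotone non-decreasing, then Lemma~\ref{muzukasii} gives $u\le v$ a.e.\ $\implies f\circ u\le f\circ v$ a.e., and the monotonicity of $\blim$ (Proposition~\ref{koko}) then gives $\mu(u)\le\mu(v)$. If instead $f$ is continuous, Lemma~\ref{muzukasii} says $\bar f$ is continuous in the $\li$-metric, so $\mu=\blim\circ\bar f$ is a composition of two continuous maps and hence continuous.

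I do not anticipate a serious obstacle: the analytic content, namely that $\bar f$ is well defined and (when $f$ is) continuous, has been isolated in Lemma~\ref{muzukasii}, and the order-theoretic and invariance content sits in Propositions~\ref{koko} and~\ref{blimactually}. The single point that needs care is to resist using $f(a\lor b)=f(a)\lor f(b)$ and to argue the valuation identity purely from the fact that $u\lor v$ and $u\land v$ carry, pointwise, the same two values as $u$ and $v$.
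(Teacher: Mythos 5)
Your proposal is correct and follows essentially the same route as the paper: the valuation identity comes from the pointwise multiset identity $\{u(x),v(x)\}=\{(u\lor v)(x),(u\land v)(x)\}$ (the device of Proposition~\ref{koko}) together with the linearity of $\blim$, while well-definedness, monotonicity and continuity are pulled back through Lemma~\ref{muzukasii}. Your explicit treatment of translation-invariance via $f\circ(\mathcal{T}_t u)=\mathcal{T}_t(f\circ u)$ and your warning against using $f(a\lor b)=f(a)\lor f(b)$ merely spell out steps the paper leaves implicit.
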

\begin{proof}
Reasoning just like we did in Proposition \ref{koko} we have 
\begin{equation}\label{effe}
f(u)+f(v)=f(u\lor v)+f(u\land v)
\end{equation}
for all $u,v\in\li$ and all $f:\R\to\R$ as above (notice that all functions in \eqref{effe} belong to $\li$ by Lemma \ref{muzukasii}). Hence $\mu(u)+\mu(v)=\mu(u\lor v)+\mu(u\land v)$ for all $u,v\in\li$.

Moreover, as $f(0)=0$ by hypothesis, $\mu(\0)=\blim f(\0)= \blim \0 = 0$, whence $\mu$ is a valuation.
Translation-invariance of $\mu$ is implied by by that of Banach limits. 

By Lemma \ref{muzukasii}, monotonicity (respectively, continuity) of $f$ implies that also $\mu$ is monotone (respectively, continuous). 
\end{proof}

It is noteworthy that Proposition \ref{generalex} does not exhaust all the possibile translation-invariance valuations (be they either monotone or continuous). Nevertheless many other valuations can be formed, for instance the sum of a finite number of those introduced in Proposition \ref{generalex} (taking the Banach limits possibly with respect to distinct ultrafilters) or even the series of countably  many of them as we will see in the following proposition. 

\begin{prop}\label{serie}
Let $\set{\U_i}_{i\in\N}$ be a countable family of either right or left Lebesgue ultrafilters. 
Let $\set{f_i}_{i\in\N}$ be a family of real valued functions such that $f_i(0)=0$ for all $i\in\N$ and 
\begin{equation}\label{summab}
\sum_{i=1}^\infty \sup_{\intcc{-m,m}}\abs{f_i}<\infty \quad \text{for all }m>0. 
\end{equation}
Then the functional 
$$
\fullfunction{\mu}{\li}{\R}{u}{\sum_{i=1}^\infty \mathcal{B}\lim_{\U_i}f_i(u)}
$$
is a translation-invariant valuation.
Moreover, if the $f_i$'s are monotone (respectively, continuous) then so is $\mu$.
\end{prop}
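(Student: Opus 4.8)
The plan is to reduce everything to the single-ultrafilter results already established in Propositions \ref{koko} and \ref{generalex}, using the summability hypothesis \eqref{summab} as a Weierstrass-type majorant to control the infinite sum. First I would check that $\mu$ is well defined, i.e. that the defining series converges for every $u\in\li$. Put $m:=\norm{u}_\infty$; then $-{\bf m}\le u\le{\bf m}$ almost everywhere, so by Lemma \ref{muzukasii} we get $\norm{f_i(u)}_\infty\le\sup_{\intcc{-m,m}}\abs{f_i}$ for every $i$. Since each $\mathcal{B}\lim_{\U_i}$ is a Banach limit (Proposition \ref{blimactually}) and hence of norm at most $1$, this gives $\abs{\mathcal{B}\lim_{\U_i}f_i(u)}\le\sup_{\intcc{-m,m}}\abs{f_i}$, so by \eqref{summab} the series $\sum_{i=1}^\infty\mathcal{B}\lim_{\U_i}f_i(u)$ converges absolutely; in particular $\abs{\mu(u)}\le\sum_{i=1}^\infty\sup_{\intcc{-\norm{u}_\infty,\norm{u}_\infty}}\abs{f_i}$, a bound I would reuse in the continuity part.

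Next I would verify the valuation identity, the normalisation, and translation-invariance termwise. By Proposition \ref{generalex}, for each fixed $i$ the functional $u\mapsto\mathcal{B}\lim_{\U_i}f_i(u)$ is a translation-invariant valuation, so $\mathcal{B}\lim_{\U_i}f_i(u)+\mathcal{B}\lim_{\U_i}f_i(v)=\mathcal{B}\lim_{\U_i}f_i(u\lor v)+\mathcal{B}\lim_{\U_i}f_i(u\land v)$ and $\mathcal{B}\lim_{\U_i}f_i(\mathcal{T}_t u)=\mathcal{B}\lim_{\U_i}f_i(u)$ for all $u,v\in\li$ and $t\in\R$. Because all the series in sight converge absolutely by the first step, I may add these identities over $i\in\N$ to conclude $\mu(u)+\mu(v)=\mu(u\lor v)+\mu(u\land v)$ and $\mu(\mathcal{T}_t u)=\mu(u)$; and $\mu(\0)=\sum_i\mathcal{B}\lim_{\U_i}f_i(\0)=\sum_i\mathcal{B}\lim_{\U_i}\0=0$ since $f_i(0)=0$. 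If moreover every $f_i$ is non-decreasing, then each summand is monotone by Proposition \ref{generalex}, so $u\le v$ almost everywhere forces $\mathcal{B}\lim_{\U_i}f_i(u)\le\mathcal{B}\lim_{\U_i}f_i(v)$ for every $i$, and summing preserves the inequality.

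The only point that needs real care is continuity, and it is precisely here that \eqref{summab} is needed for all $m>0$ rather than for a single one. Given $\del{u_n}_{n\in\N}\subset\li$ with $\norm{u_n-u}_\infty\to0$, I would first observe that $\norm{u_n}_\infty\to\norm{u}_\infty$, so all the $u_n$ and $u$ are bounded in $\li$-norm by some common $m$; the estimate of the first paragraph then yields $\abs{\mathcal{B}\lim_{\U_i}f_i(u_n)-\mathcal{B}\lim_{\U_i}f_i(u)}\le 2\sup_{\intcc{-m,m}}\abs{f_i}=:M_i$ with $\sum_{i=1}^\infty M_i<\infty$. A standard $\eps/2$-splitting finishes the argument: fix $\eps>0$, choose $N$ with $\sum_{i>N}M_i<\eps/2$, and then use that the finitely many functionals $u\mapsto\mathcal{B}\lim_{\U_i}f_i(u)$, $i=1,\dots,N$, are continuous (Proposition \ref{generalex}) to find $n_0$ with $\sum_{i=1}^N\abs{\mathcal{B}\lim_{\U_i}f_i(u_n)-\mathcal{B}\lim_{\U_i}f_i(u)}<\eps/2$ for $n>n_0$; combining, $\abs{\mu(u_n)-\mu(u)}<\eps$ for $n>n_0$. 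I expect this Weierstrass-$M$-test step, resting on the uniform tail bound along the convergent sequence, to be the main (essentially the only) obstacle, since everything else is a termwise transcription of Propositions \ref{koko} and \ref{generalex}.
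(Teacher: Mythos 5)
Your proposal is correct and follows essentially the same route as the paper: termwise reduction to Proposition \ref{generalex}, absolute convergence from \eqref{summab} via the bound $\abs{\mathcal{B}\lim_{\U_i}f_i(u)}\le\sup_{\intcc{-m,m}}\abs{f_i}$, and a dominated-convergence (Weierstrass $M$-test) argument for continuity, which you merely write out more explicitly than the paper does. No gaps.
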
  
\begin{proof}
For the sake of notation, we will set $\phi_i(\cdot):=\mathcal{B}\Lim{\U_i}f_i(\cdot)$.
First of all, condition \eqref{summab} tells us that the functional $\mu$ is well defined, as the series $\sum_{i=1}^\infty \phi_i(u)$ is absolutely convergent for all $u\in\li$.
For obvious reasons $\mu$ is a translation-invariant valuation.

Let now the $f_i$'s be monotone and let $u,v\in\li$ with $u\le v$ almost everywhere. As the $\phi_i$'s are monotone (see Proposition \ref{generalex}) we have that 
$$
\sum_{i=1}^n \phi_i(u)\le \sum_{i=1}^n \phi_i(v)\quad \forall n\in\N,
$$
hence, passing to the limit as $n\to\infty$ we obtain $\mu(u)\le\mu(v)$.

Let the $f_i$'s be continuous this time and consider a sequence $\del{u_n}_{n\in\N}$ in $\li$ such that $u_n\to u\in\li$ in the $\li$-metric. As $\norm{u_n}_\infty\to\norm{u}_\infty$, we know that there exists a real number $m>0$ such that $m>\norm{u_n}_\infty$, $m> \norm{u}_\infty$ for all $n\in\N$; this implies that, for all $i,n\in\N$, $\abs{\phi_i(u_n)}\le \Sup{\intcc{-m,m}}\abs{f_i}$.
Moreover, as the $\phi_i$'s are continuous by Proposition \ref{generalex} and $\sum_{i=1}^\infty \Sup{\intcc{-m,m}}\abs{f_i}$ is convergent by hypothesis, the dominated convergence theorem for series implies that
$$
\lim_{n\to\infty}\sum_{i=1}^\infty \phi_i(u_n)=\sum_{i=1}^\infty \phi_i(u).
$$

\end{proof}

\section{The $n$-dimensional case}\label{ndim}
The choice of working in dimension 1 is mostly due to psychological reasons, rather than mathematical ones. 
Indeed right (or left) Banach limits of bounded real functions of one variable seemed to arise naturally as an easy generalization of Banach limits of bounded real sequences. Anyway it turned out that the almost totality of the proofs in these paper could be adapted to deal with valuations on $\li(\R^n)$ without much effort.

Translation-invariant valuations still vanish on functions of compact support even in the $n$-dimensional case. Indeed if, for all $m>0$ and hyperplane $H\subset \R^n$, we set 
$$
B_m(H):=\sett{x\in\R^n}{{\rm dist}(x,H)<m},
$$
we can state the following stronger result (compare with Proposition \ref{vanish}).
\begin{prop}
Let $\mu:\li(\R^n)\to \R$ be a translation-invariant valuation. If $u\in\li(\R^n)$ is such that $u$ vanishes almost everywhere outside $B_m(H)$ for some hyperplane $H\subset \R^n$ and some $m>0$, then $\mu(u)=0$.
\end{prop}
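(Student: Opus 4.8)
The plan is to repeat the proof of Proposition~\ref{vanish} almost verbatim, the sole change being that the ``periodic prolongation to the right'' is replaced by a periodic prolongation along a normal direction of $H$. As a preliminary remark one should note that Lemmas~\ref{distr} and~\ref{ddd} hold word for word with $\li$ replaced by $\li(\R^n)$: their proofs are purely pointwise and use only that $\{u\ne 0\}\cap\{v\ne 0\}$ is Lebesgue-null, so nothing changes. In particular a valuation on $\li(\R^n)$ is additive on pairs of functions with essentially disjoint support.

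By translation-invariance of $\mu$ we may assume $\0\in H$; fix a unit normal $v\in\sn$ of $H$, so that $B_m(H)=\{x\in\R^n:\ |\langle x,v\rangle|<m\}$ is a slab of width $2m$ and $u$ vanishes a.e.\ where $|\langle x,v\rangle|\ge m$. Define $\widetilde u:\R^n\to\R$ by $\widetilde u(x):=0$ if $\langle x,v\rangle<-m$, and $\widetilde u(x):=u(x-2mk\,v)$ if $\langle x,v\rangle\ge -m$, where $k=k(x)\ge 0$ is the unique integer with $\langle x,v\rangle-2mk\in[-m,m)$; write $\mathcal{T}_{2mv}\widetilde u$ for the translate $x\mapsto\widetilde u(x-2mv)$. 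Since the slabs $\{x:\ 2mk-m\le\langle x,v\rangle<2mk+m\}$, $k\ge 0$, partition $\{x:\langle x,v\rangle\ge -m\}$ into measurable sets and $\widetilde u$ is a translate of $u$ on each of them, $\widetilde u$ is measurable with $|\widetilde u|\le\norm{u}_\infty$ a.e., hence $\widetilde u\in\li(\R^n)$.

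It then remains to verify, exactly as in Proposition~\ref{vanish}, the two facts: (i) $u$ and $\mathcal{T}_{2mv}\widetilde u$ have disjoint support; and (ii) $u+\mathcal{T}_{2mv}\widetilde u\equiv\widetilde u$. For (i) one checks that $\{u\ne 0\}$ is, up to a null set, contained in the open slab $\{|\langle x,v\rangle|<m\}$, while $\{\mathcal{T}_{2mv}\widetilde u\ne 0\}$ is, up to a null set, contained in $\{\langle x,v\rangle>m\}$, so $u\cdot\mathcal{T}_{2mv}\widetilde u\equiv\0$. For (ii) one splits $\R^n$ into the three strips $\{\langle x,v\rangle<-m\}$, $\{-m\le\langle x,v\rangle<m\}$ and $\{\langle x,v\rangle\ge m\}$ and checks the identity on each; on the last strip one uses that $u\equiv\0$ there and that $k(x-2mv)=k(x)-1$, which gives $\mathcal{T}_{2mv}\widetilde u(x)=\widetilde u(x)$ a.e. Granting (i) and (ii), the $n$-dimensional version of Lemma~\ref{ddd} yields
$$
\mu(\widetilde u)=\mu\bigl(u+\mathcal{T}_{2mv}\widetilde u\bigr)=\mu(u)+\mu\bigl(\mathcal{T}_{2mv}\widetilde u\bigr)=\mu(u)+\mu(\widetilde u),
$$
and therefore $\mu(u)=0$.

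I do not expect any genuine obstacle: the argument is a mechanical transcription of the one-dimensional proof. The only measure-theoretic point worth a word is that the ``seams'' between consecutive slabs in the partition used to build $\widetilde u$, as well as any overlap of the two supports in (i), are contained in hyperplanes of $\R^n$, hence Lebesgue-null and harmless (in dimension one the analogous sets were single points). Alternatively one may skip the reduction $\0\in H$ and prolong directly from the slab $\{x:c-m<\langle x,v\rangle<c+m\}$, with no change to the computation.
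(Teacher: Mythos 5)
Your proof is correct and follows exactly the route the paper indicates (its own ``proof'' is only a one-line sketch: prolong $u$ periodically in a direction orthogonal to $H$ and conclude as in Proposition~\ref{vanish}). You have simply filled in the details of that sketch, including the harmless null-set seams, so nothing further is needed.
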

\begin{proof}[Sketch of the proof]
Prolong $u$ periodically in a direction which is orthogonal to $H$ and then conclude just like we did in Proposition \ref{vanish}. 
\end{proof}

In order to define Banach limits in the $n$-dimensional setting one could think of retracing the whole process shown in sections from 3 to 7 of this paper (i.e. proving the existence of suitable ultrafilters on $\R^n$, then proceeding to define ultralimits and finally Banach limits through an appropriate adaptation of the Ces\`aro-like average). It is in fact true that one can define ultrafilters and ultralimits just like we did in sections 4, 5 and 7, but even in this case, as ultralimits fail to be translation-invariant, one must resort to Ces\`aro-like averages anyways.
That being the case, the following definition of Ces\`aro-like average permits to obtain translation-invariant functionals, while not having to rely on ultrafilters on $\R^n$ for $n>1$.  
\begin{defi}[Ces\`aro-like average of a real function of several real variables]
Let $u:\R^n\to\R$ be a bounded measurable function.We define its Ces\`aro-like average $\widehat{u}:\R\to\R$ as 
\begin{equation}\nonumber
\widehat{u}(x) := \frac{1}{|B_x(0)|} \int_{B_x(0)} u(y) \dif y \quad \text{if } x>0,
\end{equation}
where $|B_x(0)|$ denotes the $n$-dimensional Lebesgue measure of the sphere;\\
while we set 
\begin{equation}\nonumber
\widehat{u}(x)=0 \quad \text{if } x\le 0.
\end{equation}
\end{defi}

It is now easy to prove that the just defined operator induces a continuous linear operator $\widehat{\cdot}: \li(\R^n)\to \li(\R)$. 
Moreover, following Lemma \ref{cesprop} as a guideline, it is straightforward to show that the operator above preserves order and limits, i.e. for all $u\in\li(\R^n)$ 
\begin{equation}\nonumber
\begin{gathered}
u\ge 0 \text{ almost everywhere} \implies \widehat{u}\ge 0 \text{ almost everywhere}; \\
\lim_{\abs{x}\to\infty} u(x)=l \implies \lim_{x\to\infty} \widehat{u}(x)=l .
\end{gathered}
\end{equation}

Therefore, for every fixed right Lebesgue-ultrafilter $\U$ we have a linear, order-preserving, continuous functional 
$$
\fullfunction{\blim}{\li(\R^n)}{\R}{u}{\blim \widehat{u}\,.}
$$

Translation-invariance can be proven as in Proposition \ref{blimactually}, namely, $\forall t\in\R^n$, $\forall u\in\li(\R^n)$, 
we need to show that $\Lim{x\to\infty}\widehat{\mathcal{T}_t u}(x)-\widehat{u}(x)= 0$.  
To that end, if we consider $x>0$, we get
\begin{equation}\nonumber
\begin{aligned}
\widehat{\mathcal{T}_t u}(x)-\widehat{u}(x)= \frac{1}{\abs{B_x(0)}}\int_{B_x(0)}\mathcal{T}_t u(y)\dif y - \frac{1}{\abs{B_x(0)}}\int_{B_x(0)}u(y)\dif y=   \\
\frac{1}{\abs{B_x(0)}} \left(  \int_{B_x(0)} u(y-t)\dif y - \int_{B_x(0)} u(y)\dif y  \right)= \frac{1}{\abs{B_x(0)}} \left(  \int_{B_x(-t)} u(y)\dif y - \int_{B_x(0)} u(y)\dif y  \right)= \\
\frac{1}{\abs{B_x(0)}} \left(  \int_{B_x(-t)\setminus B_x(0)} u(y)\dif y - \int_{B_x(0)\setminus B_x(-t)} u(y)\dif y  \right).
\end{aligned}
\end{equation}
Therefore, if we indicate symmetric difference by $\bigtriangleup$, i.e. $A\bigtriangleup B:= (A\cup B)\setminus (A\cap B) $, we can write 
$$
\abs{\widehat{\mathcal{T}_t u}(x)-\widehat{u}(x)}\le 2\norm{u}_\infty \frac{\abs{B_x(-t)\bigtriangleup B_x(0)}}{\abs{B_x(0)}},
$$
and since $\frac{\abs{B_x(-t)\bigtriangleup B_x(0)}}{\abs{B_x(0)}}$ tends to $0$ as $x\to \infty$ (see the next lemma), we conclude that 
$$
\lim_{x\to\infty} \widehat{\mathcal{T}_t u}(x)-\widehat{u}(x)=0
$$
as claimed.

\begin{lem}
Fix $t\in\R^n$; the following quantity (defined for all positive $x\in\R$) 
$$
\frac{\abs{B_x(t)\cap B_x(0)}}{\abs{B_x(0)}}
$$
tends to $1$ as $x\to \infty$.
\end{lem}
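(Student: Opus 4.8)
The plan is to trap the intersection $B_x(t)\cap B_x(0)$ between two balls centred at the origin and then apply the squeeze theorem. First I would fix notation: write $\omega_n:=\abs{B_1(0)}$ for the volume of the $n$-dimensional unit ball, so that $\abs{B_x(0)}=\omega_n x^n$ for every $x>0$ by homogeneity of Lebesgue measure. Observe also that $\abs{B_x(t)\cap B_x(0)}=\abs{B_x(-t)\cap B_x(0)}$ (translate by $-t$), so it makes no difference whether one argues with $t$ or with $-t$, and in particular this lemma is precisely what was invoked just above.

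The core of the argument is the chain of inclusions
$$B_{x-\abs{t}}(0)\sub B_x(t)\cap B_x(0)\sub B_x(0)\qquad\text{for every }x>\abs{t}.$$
The rightmost inclusion is immediate; for the leftmost one, if $\abs{y}<x-\abs{t}$ then clearly $\abs{y}<x$, and moreover $\abs{y-t}\le\abs{y}+\abs{t}<x$, so $y\in B_x(t)$ as well. Taking $n$-dimensional Lebesgue measures and dividing through by $\abs{B_x(0)}=\omega_n x^n$ gives, for all $x>\abs{t}$,
$$\del{1-\frac{\abs{t}}{x}}^{\,n}=\frac{\abs{B_{x-\abs{t}}(0)}}{\abs{B_x(0)}}\le\frac{\abs{B_x(t)\cap B_x(0)}}{\abs{B_x(0)}}\le 1.$$

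Finally, letting $x\to\infty$ the leftmost quantity tends to $1$, so the squeeze theorem forces $\Lim{x\to\infty}\abs{B_x(t)\cap B_x(0)}/\abs{B_x(0)}=1$. For completeness I would also remark that, since $B_x(-t)$ and $B_x(0)$ have the same volume, $\abs{B_x(-t)\bigtriangleup B_x(0)}=2\bigl(\abs{B_x(0)}-\abs{B_x(-t)\cap B_x(0)}\bigr)$, whence $\abs{B_x(-t)\bigtriangleup B_x(0)}/\abs{B_x(0)}\to 0$, which is exactly the fact used in the displayed estimate preceding this lemma. There is essentially no obstacle in this proof; the only points deserving a second look are the verification of the inclusion $B_{x-\abs{t}}(0)\sub B_x(t)$ and the scaling identity $\abs{B_x(0)}=\omega_n x^n$.
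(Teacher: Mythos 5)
Your proof is correct, and it takes a genuinely different and substantially simpler route than the paper's. The paper proceeds by induction on the dimension $n$: the base case $n=1$ is computed explicitly, and the inductive step integrates over parallel layers (Cavalieri), performs a change of variables, and splits the resulting integrals with an $\eps$-argument to pass the limit through the ratio. You instead sandwich the intersection between two concentric balls, $B_{x-\abs{t}}(0)\sub B_x(t)\cap B_x(0)\sub B_x(0)$, and use the homogeneity $\abs{B_r(0)}=\omega_n r^n$ to get the explicit lower bound $(1-\abs{t}/x)^n\to 1$; both the inclusion (via the triangle inequality) and the scaling identity are sound, so the squeeze theorem finishes the proof in a few lines. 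Your argument buys brevity and robustness: it avoids the induction entirely, and it would work verbatim with the balls replaced by dilates $xK$ of any fixed convex body $K$ containing the origin in its interior, whereas the paper's layer-integration is tied to the rotational structure of the ball. Your closing remark correctly translates the statement into the form actually used before the lemma, namely $\abs{B_x(-t)\bigtriangleup B_x(0)}/\abs{B_x(0)}\to 0$, using $\abs{B_x(-t)}=\abs{B_x(0)}$. Nothing is missing.
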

\begin{proof}
We will prove this lemma by induction.
If $n=1$ the claim follows easily as, for $x>\abs{t}/2$,
$$
\frac{\abs{B_x(t)\cap B_x(0)}}{\abs{B_x(0)}}= \frac{\abs{\intoo{t-x,t+x}\cap\del{-x,x}}}{\abs{\intoo{-x,x}}}=\frac{2x-\abs{t}}{2x}\longrightarrow 1.
$$
Suppose now that the claim holds true for $n=1,\dots,k$; we are going to prove the claim for $n=k+1$.
Without loss of generality we may assume that $t=(s,0)\in\R^k\times \R$ for some $s\in\R^k$. 
In order to ease the notation a bit, we will consider $s$ fixed and introduce the following functions:
$$
f(x):= \abs{B_x(s)\cap B_x(0)}, \quad g(x):=\abs{B_x(0)} \quad \forall x>0,
$$ 
In other words, the inductive hypothesis for $n=k$ can now be rewritten as 
\begin{equation} \label{hh}
\lim_{x\to\infty} \frac{f(x)}{g(x)}=1.
\end{equation}
Moreover, integrating over the layers (see figure \ref{sfere}) yields the following representation of the claim in the case $n=k+1$:
\begin{equation}\label{k+1}
\lim_{x\to\infty} \frac{\int_{-x}^x f(\sqrt{x^2-r^2})\dif r}{\int_{-x}^x g(\sqrt{x^2-r^2})\dif r}=\lim_{x\to\infty} \frac{\cancel{2}\int_{0}^x f(\sqrt{x^2-r^2})\dif r}{\cancel{2}\int_{0}^x g(\sqrt{x^2-r^2})\dif r}=1
\end{equation}

\begin{figure}[h]
    \centering
    \includegraphics[width=0.6\textwidth]{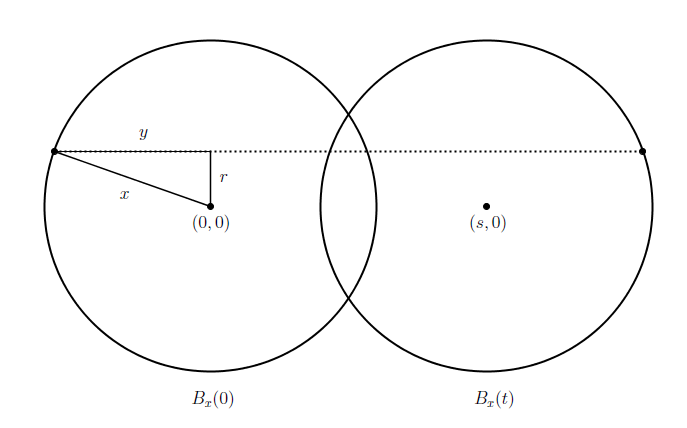}
    \caption{The above construction in the case $n=2$}
    \label{sfere}
\end{figure}

Through the change of variable $y:=\sqrt{x^2-r^2}$, and the introduction of the following auxiliary functions
$$
\varphi(x,y):=\frac{f(y)}{\sqrt{x^2-y^2}}, \quad \psi(x,y):=\frac{g(y)}{\sqrt{x^2-y^2}}\quad \text{(for $0<y<x$),}
$$
the limit in \eqref{k+1} can be rewritten as 
\begin{equation}\label{indclaim}
\lim_{x\to\infty} \frac{\int_0^x \varphi(x,y)\dif y}{\int_0^x \psi(x,y)\dif y}=1.
\end{equation}
By \eqref{hh}, if we fix $\eps>0$ we find a $z>0$ such that, for all $y>z$,   
$$
g(y)(1-\eps)\le f(y)\le g(y)(1+\eps).
$$
Therefore, we can split the integrals in \eqref{indclaim} to get the estimates we need, namely
\begin{equation}\nonumber
\begin{aligned}
\int_0^x \varphi(x,y) \dif y=\int_0^z \varphi(x,y)\dif y+\int_z^x \varphi(x,y)\dif y\le  z\cdot \varphi(x,z)+(1+\eps)\int_z^x \psi(x,y)\dif y,
\end{aligned}
\end{equation}
and 
\begin{equation}\nonumber
\int_0^x \varphi(x,y)\dif y\ge(1-\eps)\int_z^x \psi(x,y)\dif y.
\end{equation}
Similar estimates can be obtained for the the integral containing $\psi$ as well:
$$
\int_z^x \psi(x,y) \dif y \le \int_0^x \psi (x,y) \dif y \le z\cdot \psi(x,z)+\int_z^x \psi(x,y) \dif y.
$$
Therefore,
$$
\frac{(1-\eps)\int_z^x \psi(x,y) \dif y}{z\cdot \psi(x,z)+\int_z^x \psi(x,y) \dif y}       \le 
\frac{\int_0^x \varphi(x,y)\dif y}{\int_0^x \psi(x,y) \dif y} 
\le \frac{z\cdot \varphi(x,z)+(1+\eps) \int_z^x \psi(x,y) \dif y}{\int_z^x \psi(x,y) \dif y}.
$$
Letting $x\to\infty$ we get (notice that $\Lim{x\to\infty} z\cdot \varphi(x,z)= \Lim{x\to\infty} z\cdot \psi(x,z)=0$)
$$
1-\eps \le \liminf_{x\to\infty} \frac{\int_0^x \varphi(x,y)\dif y}{\int_0^x \psi(x,y) \dif y}  \le \limsup_{x\to\infty} \frac{\int_0^x \varphi(x,y)\dif y}{\int_0^x \psi(x,y) \dif y} \le 1+\eps.
$$
We conclude by the arbitrariness of $\eps$.
\end{proof}
Proposition \ref{generalex}, the subsequent remarks and Proposition \ref{serie} clearly hold true in the $n$-dimensional case as well.

\bigskip

\noindent
\textsc{
Research Center for Pure and Applied Mathematics, Graduate
School of
Information Sciences, Tohoku University, Sendai 980-8579
, Japan.
} \\
\noindent
{\em Electronic mail address:}
cava@ims.is.tohoku.ac.jp


\begin{thebibliography}{10} 

\bibitem{amato} L.Amato, {\em Valutazioni su spazi di funzioni con analisi dettagliata degli spazi $L^p$ e $\li$}, bachelor thesis, Universit\`a degli studi di Firenze (2014). 

\bibitem{BGW} Y. Baryshnikov, R. Ghrist, M. Wright,{\em
Hadwiger's Theorem for definable functions}, Adv. Math. 245 (2013), 573-586. 

\bibitem{noi} L. Cavallina, A. Colesanti, {\em Monotone valuations on the space of convex functions}, arXiv:1502.06729. 

\bibitem{zorn} H. B. Enderton., {\em Elements of Set Theory}, Academic Press, Inc., San Diego, California (1977), QA248.E5.

\bibitem{Had} H. Hadwiger, {\em Vorlesungen \"uber Inhalt, Oberfl\"ache und Isoperimetrie}, Springer-Verlag, Berlin-G\"ottingen-Heidelberg, 1957. 

\bibitem{naiveset} P. R. Halmos, {\em Naive Set Theory}, Springer-Verlag, New York, 1974.  

\bibitem{galvin} D. Galvin, {\em Ultrafilters, with applications to analysis, social choice and combinatorics},  https://www3.nd.edu/~dgalvin1/pdf/ultrafilters.pdf. 

\bibitem{jerison} M. Jerison, {\em On the set of all generalized limits of bounded sequences}, Canad. J. Math. 9 (1957), 79-89.

\bibitem{Klain} D. Klain, {\em A short proof of Hadwiger's characterization theorem}, Mathematika 42 (1995), 329-339.

\bibitem{Klain-Rota} D. Klain, G. Rota, {\em Introduction to geometric probability}, Cambridge University Press, New York, 1997.

\bibitem{Kone} H. Kone, {\em Valuations on Orlicz spaces and $L^\phi$-star sets},
 Adv. in Appl. Math.  52  (2014), 82-98.

\bibitem{Ludwig-2013} M. Ludwig, {\em Covariance matrices and valuations},
 Adv. in Appl. Math.  51  (2013), 359-366.

\bibitem{Ludwig-2011a} M. Ludwig, {\em Fisher information and matrix-valued valuations},
Adv. Math.  226  (2011),  2700-2711.

\bibitem{Ludwig-2011} M. Ludwig, {\em Valuations on function spaces}, 
Adv. Geom. 11 (2011), 745 - 756.

\bibitem{Ludwig-2012} M. Ludwig, {\em Valuations on Sobolev spaces}, 
Amer. J. Math. 134 (2012), 824 - 842.

\bibitem{Ober} M. Ober, {\em $L^p$-Minkowski valuations on $L^q$-spaces},
 J. Math. Anal. Appl.  414  (2014),  68-87.


\bibitem{hb}  L. Sucheston, {\em Banach limits}, Amer. Math. Monthly, 74 (1967), 308-311.

\bibitem{Tsang-2010} A. Tsang,  {\em Valuations on $L^p$-spaces}, Int. Math. Res. Not. IMRN 2010, 20, 3993-4023.

\bibitem{Tsang-2011} A. Tsang, {\em Minkowski valuations on $L^p$-spaces}, Trans. Amer. Math. Soc. 364 (2012), 12, 6159-6186.

\bibitem{Wang-thesis} T. Wang, {\em Affine Sobolev inequalities}, PhD Thesis, Technische Universit\"at, Vienna, 2013.  

\bibitem{Wang} T. Wang, {\em Semi-valuations on $BV(\R^n)$}, Indiana Univ. Math. J.  63  (2014), 1447--1465.

\bibitem{Wright} M. Wright, {\em Hadwiger integration on definable functions}, PhD Thesis, 2011, University of Pennsylvania. 


\end{thebibliography}
\end{document}